\newtheorem{thrm}{Theorem}[section]
\newtheorem{lem}[thrm]{Lemma}
\newtheorem{prop}[thrm]{Proposition}
\newtheorem{cor}[thrm]{Corollary}
\theoremstyle{definition}
\numberwithin{equation}{section}
\def \bC {\mathbb C}
\def \bF {\mathbb F}
\def \bH {\mathbb H}
\def \bR {\mathbb R}
\def \bZ {\mathbb Z}
\def \cC {\mathcal C}
\def \cF {\mathcal F}
\def \cO {\mathcal O}
\def \cS {\mathcal S}
\def \cV {\mathcal V}
\def \cW {\mathcal W}
\def \cX {\mathcal X}
\def \fa {\mathfrak a}
\def \fg {\mathfrak g}
\def \fh {\mathfrak h}
\def \fk {\mathfrak k}
\def \fm {\mathfrak m}
\def \fn {\mathfrak n}
\def \fp {\mathfrak p}
\def \ft {\mathfrak t}
\def \bfZ {\mathbf Z}
\def \Mp {\text{\rm Mp}}
\def \Sp {\text{\rm Sp}}
\def \SO {\text{\rm SO}}
\def \so {\mathfrak {so}}
\def \SU {\text{\rm SU}}
\def \SL {\text{\rm SL}}
\def \GL {\text{\rm GL}}
\def \gl {\mathfrak {gl}}
\def \Mat {\text{\rm M}}
\def \ad {\text{\rm ad}}
\def \Pf {\text{\rm Pf}}
\def \Tr {\text{\rm Tr}}
\def \diag {\text{\rm diag}}
\def \sgn {\text{\rm sgn}}
\def\bm{\underline{\bold m}}
\def\bn{\underline{\bold n}}
\begin{document}

\author{V. Fischer and G. Zhang}

\address{Mathematical Sciences, Chalmers University of Technology and
Mathematical Sciences, G\"oteborg University, SE-412 96 G\"oteborg, Sweden}

\email{genkai@chalmers.se}

\address{Università degli studi di Padova, DMMMSA, Via Trieste 63,
           35121 Padova, Italy}

\email{fischer@dmsa.unipd.it}

\keywords{Complementary series, Harmonic analysis on semisimple and nilpotent Lie groups}

\subjclass[2010]{Primary 22E46; Secondary 43A80}

\thanks{G. Zhang acknowledges the  partially support by Swedish Research Council (VR). Veronique Fischer acknowledges the support of the London Mathematical Society (LMS)}

\title[Degenerate principal series representations of  $\SO(p+1, p)$]
{Degenerate Principal Series Representations of $\SO(p+1,p)$}

\begin{abstract}
For  $p$ odd,
the Lie
group $G^\sharp=\SO_0(p+1,p+1)$
 has a family of
unitary degenerate principal series representations
realized on the space of real $(p+1)\times (p+1)$
skew symmetric matrices, similar
to the Stein's complementary series
for $\SL(2n, \bC)$ or Speh's
representation for $\SL(2n, \bR)$. 
We consider their restriction on the subgroup
 $G_0=\SO_0(p+1,p)$ and prove
that they are still irreducible and is equivalent
to (a unitarization of)  the principal
series representation of $G$, and also irreducible
under a maximal parabolic subgroup of $G$.
\end{abstract}

\maketitle

\section{Introduction}

In the present paper we shall study
the unitarity of degenerate principal series
representations of the group $G=\SO(p+1, p)$
induced from certain maximal parabolic subgroup 
for odd $p=2q-1$.

In the case of the group $\SO_0(n, n)$, or more generally
$\SU(n, n; \bF)$, for $\bF
=\bR, \bC,\bH$, 
 Johnson 
\cite{Johnson-mathann} has determined 
the range of unitarity of the
representations;
 for $\SU(n,n,\bF)$, $n$ even, 
 he found certain complementary
series. 
Some generalizations
of these results were obtained in \cite{Sahi-crelle, oz-duke, gkz-ma,
Howe-Lee-jfa} for a larger class of groups
by using  computations in the compact picture. 
The analysis of  these
representations in the non-compact picture
has been done in \cite{BSZ}.

 We shall prove that the 
  restriction of the complementary series of
$G^\sharp=\SO_0(p+1, p+1)$ 
to the opposite maximal parabolic subgroup
of $G$  is irreducible,
and in particular the restriction
to the identity component $G_0$
of $G$ is irreducible.
 We shall use mostly
 the non-compact realization
of the principal series. 
The proof relies on both
Euclidean  and nilpotent Fourier
transform.

The restriction
of the degenerate principal series
representations of $\SO(n, n)$ 
to the subgroups $\SO(n, m)\times \SO(n-m)$ 
for $m <n$ has
been studied earlier by Lee-Loke \cite{Lee-Loke}
in the compact picture,
 the representations 
of $\SO(n, m)
\times \SO(n-m)$ appearing
are of the form $\tau \times \tau'$,
and the representations
$\tau$ are degenerate principal
series. It
might be true that the representations
$\tau$ of $\SO(n, m)$ are also irreducible
under corresponding  the maximal parabolic subgroup, as we show
here for $m=n-1$.
We mention also that there has been quite some study of
 complementary series
representations for semisimple Lie 
groups.  In \cite{BK}
 a large class
of complementary series representations is
 constructed 
 with parabolic subgroups being cuspidal and maximal;
our case of 
$\SO(p+1, p)$  here is however not cuspidal.
For the groups $\SO(p, q)$
some complementary series similar
to ours can be constructed
by using the branching of holomorphic
representations of $\SU(p,q)$ to
$\SO(p, q)$. However those constructions
works only for $q-p>2$
(see \cite[Therorem 3.1]{Neretin-Olshanski},
\cite[Therorem 5.2]{Z-aim}),
 and thus they do not cover
our present case of $\SO(p+1, p)$.
Our result about the irreducibility
of the restriction to $\SO(p+1, p)$
of representations of $\SO_0(p+1, p+1)$
is in a way similar to the Kirillov
conjecture, now a theorem \cite{Baruch-annmath, Sahi-Ste} for 
$\GL_{p+1}(\bR)$
and $\GL_p(\bR)\times \bR^p$. See also
\cite{Speh-V} on the study of the restriction
of complementary series of $\SO(n, 1)$
to $\SO(n-1, 1)$,   \cite{Kobayashi-korea}
on  branching of highest weight
representations, and 
\cite{Kobayashi-contmath}
 the classification of finitely decomposable
representations of $G^\sharp$ under $G$.
We note also that the irreducibility
result under the the parabolic group $P$
can possibly be also proved abstractly
by using the Mackey theory on induced
representations. However we present
a relatively  elementary and direct proof, in particular
it also yields a decomposition of
the representation under the subgroup $\bar N \Sp(q-1, \mathbb R)$.

\section{Preliminaries}
\label{sec_G_P}

\medskip
\subsection{The group 
 $G^\sharp=\SO_0(p+1,p+1)$ and 
$G=\SO(p+1,p)$}\hfill\\ \vspace{-0.5em}

Let  $\Mat_{p,q}$
be  the space of real $p\times q$-matrices,
and denote $\Mat_p= \Mat_{p, p}$.
Denote  $\cX=\cX_p=\{X\in \Mat_p; X=-X^t\}$ 
the subspace of  skew-symmetric
real matrices. 
We will also use
 the short-hand notation $X^{-t}={(X^t)}^{-1}$,
for an invertible  $X$.
Denote $I_n$ the identity matrix in $\Mat_n$ 
and $I_{p,q}=\diag(-I_p,I_q)$.

Let $p>1$ and $G^\sharp=
\SO_0(p+1,p+1)$
be the  identity component of $\SO(p+1, p+1)
=\{g\in \Mat_{2p+2}; \det g=1,\
 g^t I_{p+1,p+1} g=I_{p+1,p+1}\}
$,  
and $G=\SO(p+1,p)$
realized as the subgroup of $G^\sharp$ via:
$$
G=\{\diag(g,  1)\in G^\sharp\} \subset G^\sharp.
$$
The group $G$ has two connected components
and we denote  its identity component by $G_0$. 

Elements $g$ of $G^\sharp$ 
will be written as $2\times 2$ block matrices
$$
g=
\begin{pmatrix}
g_{1,1}&g_{1,2}
\\
g_{2,1}&g_{2,2}
\end{pmatrix},
$$ 
with each entry
being in $\Mat_{p+1}$.
The Lie algebra $\fg^\sharp$ of $G^\sharp$
has the   decomposition $\fg^\sharp=\fk^\sharp \oplus\fp^\sharp$
with $\fk^\sharp=\so(p+1)\oplus
\so(p+1)$
with respect to the
Cartan involution
 $g\to g^{-t}$.
The group 
$$
K^\sharp=\{\diag (k_1,k_2); \ k_1, k_2\in\SO(p+1)\}
=\SO(p+1)\times
\SO(p+1)\ ,
$$
is a maximal compact subgroup of $G^\sharp$
 with Lie algebra $\fk^\sharp$.
Correspondingly
$\fg=\fk \oplus\fp$,
$\fk= \so(p+1)\oplus
\so(p)$ and 
$$
K_0=\{\diag (k_1,k_2, 1); \ k_1\in\SO(p+1)  
\, , \ k_2\in {\rm SO}(p) \}
\sim\SO(p+1)  \times \SO(p)
 \,
 $$
is a maximal compact subgroup of $G_0$,
while
$$
K=\{\diag (k_1,k_2, \det k_2); \ k_1\in\SO(p+1)  
\, , \ k_2\in {\rm O}(p) \}
\sim\SO(p+1)  \times {\rm O}(p)
 \,
 $$
is a maximal compact subgroup of $G$.
Note that 
$K=\{I_{p+1,p+1},I_{2p+2}\} \times K_0\sim 
\bZ_2\times P$.

For $j=1,\cdots, p+1$,
let 
$$
H_j=\begin{pmatrix}
0&  X\\
X^t & 0
\end{pmatrix}\in \fp^\sharp 
\quad\mbox{where}\quad
 X=\diag(0, \cdots, 0, 1, 0, \cdots, 0)
\ ,
$$
with $1$ on the $j$th position.
Then $\ft^\sharp:=\bR H_1 +\cdots + \bR H_{p+1}$ 
and $\ft:=\bR H_1 +\cdots + \bR H_{p}$ 
are maximal abelian subspaces of $\fp^\sharp$ and $\fp$.
Let $\{\epsilon_j\}$ be the dual basis of 
$\{H_j\}$. The positive root systems of
$(\fg^\sharp, \ft^\sharp)$  and
$(\fg, \ft)
$  are $\{\epsilon_j  \pm \epsilon_k, \ 1\le j< k\le p+1\}$
and $\{\epsilon_j  \pm \epsilon_k, \ 1\le j< k\le p\}$.

\medskip

\subsection{The maximal parabolic subgroups $P^\sharp$
and $P$.}\hfill\\ \vspace{-0.5em}

We fix the elements:
$$
\xi^\sharp
=H_1 +\cdots +H_{p+1}\in \fp^\sharp
\quad \mbox{and}\quad
\xi
=H_1 +\cdots +H_{p}\in \fp
\ .
$$
Let $\fa^\sharp=\bR\xi^\sharp$, $\fa=\bR\xi$.
The root space decomposition
of $
\fg^\sharp
$ 
 under $\xi^\sharp$ and
$\fg$  under $\xi$ is then
$$
\fg^\sharp=\fn^\sharp_{-2}
 + \fm^\sharp +\fa^\sharp
+\fn_2^\sharp, 
\quad
\fg=\fn_{-2} + \fn_{-1}
+ \fm +\fa 
+\fn_{1}+\fn_{2} 
\ ,
$$
with roots $\pm{2}, 0$,
and 
 $\pm 2,\pm 1, 0$ respectively. 
We set
 for the corresponding positive root subspaces:
 $$
\fn^\sharp=\fn_2^\sharp
\quad\mbox{and}\quad
\fn=\fn_{1}+\fn_{2}
\ ,
 $$
and for the negative root spaces:
$$
\bar\fn^\sharp:=\fn_{-2}^\sharp=
(\fn_2^\sharp)^t
\quad\mbox{and}\quad
\bar \fn:= \fn_{-1}\oplus \fn_{-2} =\fn^t
\ .
$$

We shall use explicit forms for the root spaces:
$$
\fn^\sharp
=\{n^\sharp_{Z}, \, Z\in \cX_{p+1}\}
\sim\cX_{p+1}
\quad\mbox{where}\quad
n_{Z}:=
 \begin{pmatrix}
Z&-Z\\
Z&-Z\\
\end{pmatrix}
\ ,
$$
and
$$
\fn=\{
\diag(n_{(z,v)},0);  \,
z\in \mathcal X_{p}, \, v\in \bR^p\}
\quad\mbox{where}\quad
n_{(z,v)}:=
\begin{pmatrix}
z&v&-z\\
-v^t&0&v^t\\
z&v&-z
\end{pmatrix}
\ .
$$
We have:
$$
\fn=\fn_1 \oplus \fn_2
\quad\mbox{with}\quad
\fn_1=n_{0,\bR^p}\sim \bR^p
\ , \  \fn_2=n_{\cX_p,0}\sim \cX_p \ .
$$

The Lie algebra $\fn^\sharp$ is abelian and 
$\fn$ is a 2-step nilpotent Lie sub-algebra of $\fg$.
Elements
$n_{(z,v)}$ will simply be written as $(z,v)$. 
The Lie
bracket in $\fn$ is given, via the above
identification $\fn=\cX_p\oplus \bR^p$, is
$$
[ z_1+v_1, z_2+v_2]=v_1v_2^t -v_2v_1^t .
$$
Thus $\fn$ is the free nilpotent Lie algebra with $p$ generators (over $\bR$).

The centralizer of $\fa^\sharp$ in $\fg^\sharp$
is 
$$
\fm^\sharp\oplus\fa^\sharp=\{l^\sharp_{(X, Y)},
\, X=X^t,  Y^t=-Y\in  \Mat_{p+1}\},
\quad
l^\sharp_{(X, Y)}:=
 \begin{pmatrix}
Y&X\\
X&Y\\
\end{pmatrix}, 
$$
identified with $\gl(p+1)$ via 
$
l^\sharp_{(X, Y)}\mapsto X+Y\in \gl(p+1)$, 
whereas 
the centralizer of $\fa$ in $\fg$ is
$$
\fm\oplus\fa=\{\diag(l_{(x,y)},0);
\, x=x^t,  y^t=-y\in  \Mat_{p}
\}, \quad
l_{(x,y)}:=
 \begin{pmatrix}
y&0& x\\
0& 0 & 0\\
x&0& y\\
\end{pmatrix},
$$
identified with $\gl(p)$.
Note that 
\begin{equation}
  \label{ma-ma}
\gl(p)\sim 
\fm\oplus \fa 
\ \subset \ \fm^\sharp \oplus \fa^\sharp
\sim \gl(p+1) \ .
\end{equation}

Let $M^\sharp$, $A^\sharp$, $N^\sharp$ and $\bar N^\sharp$ 
be the  simply connected subgroup of $G^\sharp$ with 
Lie algebras
$\fm^\sharp$, $\fa^\sharp$, $\fn^\sharp$ and 
$\bar\fn^\sharp$ respectively. 
Let $P^\sharp=M^\sharp A^\sharp N^\sharp$ 
and $\bar P^\sharp=M^\sharp A^\sharp \bar N^\sharp$
be the  corresponding maximal parabolic subgroups
of $G^\sharp$. Similarly we define 
the connected subgroups 
$P_0=M_0AN$
and $\bar P_0=M_0A\bar N$
of $G_0$
with Lie algebra $\fm +\fa  +\fn$
and $\fm +\fa  +\bar \fn$.
Note that the centralizer
of $\fa^\sharp$ in $K^\sharp$ is 
$$
Z_{K^\sharp}(\fa^\sharp)
=K^\sharp\cap M^\sharp
=\{\diag (k_1, k_1); \ k_1\in \SO(p+1)\}\sim\SO(p+1)
\ ,
$$
and the centralizer of $\fa$ in $K$ is
$$
Z_K(\fa)
=
K\cap M =\{\diag (k_2, \det k_2,k_2,\det k_2);
 \ k_2\in {\rm O}(p)\}\sim  {\rm O}(p)
 \ ,
$$
and the centralizer of $\fa$ in $K_0$ is the connected component of the identity of $Z_K(\fa)$.
We set:
$$
M=Z_K(\fa) M_0
\quad,\quad
P= Z_K(\fa)P_0=MAN
\quad\mbox{and}\quad
\bar P=Z_K(\fa) \bar P_0=MA\bar N
\ .
$$

The group $M^\sharp A^\sharp$ 
is isomorphic to the matrix group 
$\GL_{p+1}^+=\{h\in \Mat_{p+1},\ \det h>0\}$
via:
\begin{equation}
\label{isomorphism_GL_MA}
h \in \GL_{p+1}^+  \longmapsto 
k_o^t
\begin{pmatrix}
h&0\\0&h^{-t}
\end{pmatrix}
k_o
\in M^\sharp A^\sharp
\quad,\quad
k_o=
2^{-\frac12}
\begin{pmatrix}
1&1\\-1&1
\end{pmatrix}
\ .
\end{equation}
Restricting this isomorphism to $\{\diag (h,\det h), \ h\in \GL_p\}$
and to $\{\diag (h,\det h), \ h\in \GL_p^+\}$,
we obtain an isomorphism between $MA$ and $\GL_p$
and between $M_0A$ and $\GL_p^+$.
Thus, using the isomorphisms just above as identifications,
$P^\sharp$, $P$ and $P_0$ can be described as the semi-direct product:
$$
P^\sharp= \GL^+_{p+1} N^\sharp
\quad,\quad
P=\GL_{p}N
\quad\mbox{and}\quad
P_0=\GL^+_{p}N
 \ .
 $$
 
\begin{lem}
\label{lem_inclusion}
The following inclusions of Lie algebras hold:
$$
\fm\oplus \fa \subset \fm^\sharp \oplus \fa^\sharp
\quad\mbox{and}\quad
\fm\oplus \fa \oplus \fn \subset \fm^\sharp \oplus \fa^\sharp \oplus \fn^\sharp
\ .
$$
We have:
$$
M_0A\subset M A\subset  M^\sharp A^\sharp
\quad\mbox{and}\quad
P_0\subset P\subset P^\sharp
\ .
$$
Moreover we have a
factorization of  $\exp(n_{(z,v)})\in N$
in $\GL_{p+1}^+ =M^\sharp A^\sharp$,
\begin{equation}
\label{relation_n_nsharp}
\exp n_{(z,v)}= m \exp n^\sharp_{M(z,v)}
\ ,
\end{equation}
where $m \in M^\sharp$ corresponds to $\begin{pmatrix} I_p& v \\ 0&1\end{pmatrix} \in \SL_{p+1}$ and 
\begin{equation}
\label{M(z,v)}
M(z,v):=\left[
\begin{array}{cc}
z&\frac 12 v\\-\frac 12 v^t&0
\end{array}\right]
\in \cX_{2q}
\ ,
\end{equation}
is viewed as an element of $\fn^\sharp$.
\end{lem}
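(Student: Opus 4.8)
The plan is to reduce every assertion to explicit computations with $2\times 2$ block matrices inside $\fg^\sharp$, using the matrix realizations of $\fn$, $\fn^\sharp$ and $\fm^\sharp\oplus\fa^\sharp$ recorded above together with the identifications \eqref{ma-ma} and \eqref{isomorphism_GL_MA}. First, for the Lie-algebra inclusions: the first one is just a restatement of \eqref{ma-ma} (write $\diag(l_{(x,y)},0)$ in the $(p{+}1)+(p{+}1)$ block form of $\fg^\sharp$ and read off $l^\sharp_{(X,Y)}$ with $X=\diag(x,0)$, $Y=\diag(y,0)$). For the second I would take a general root vector $\diag(n_{(z,v)},0)\in\fn$, write it again in the $(p{+}1)+(p{+}1)$ block form, and solve the linear system $\diag(n_{(z,v)},0)=l^\sharp_{(X,Y)}+n^\sharp_W$ for a symmetric $X$, a skew $Y$ and a skew $W$. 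The system is over-determined but consistent, and it forces $W=M(z,v)$ as in \eqref{M(z,v)} together with $X+Y=\left(\begin{smallmatrix}0&v\\0&0\end{smallmatrix}\right)\in\gl(p+1)$. This proves $\fn\subset\fm^\sharp\oplus\fa^\sharp\oplus\fn^\sharp$, hence the second inclusion, and as a byproduct it exhibits the $M^\sharp A^\sharp$-component and the $N^\sharp$-component of each $n_{(z,v)}$ --- which is what \eqref{relation_n_nsharp} will express at the group level.

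Next, the group inclusions, by a connectedness argument. Since $\fm\oplus\fa\subset\fm^\sharp\oplus\fa^\sharp=\mathrm{Lie}(M^\sharp A^\sharp)$ and $M^\sharp A^\sharp$ is connected, the connected subgroup of $G^\sharp$ with Lie algebra $\fm\oplus\fa$, namely $M_0A$, lies in $M^\sharp A^\sharp$; likewise $N$, connected with Lie algebra $\fn\subset\mathrm{Lie}(P^\sharp)$, lies in $P^\sharp$, so $P_0=M_0AN\subset P^\sharp$. To reach $M$ and $P$ themselves, note that $Z_K(\fa)$ commutes not only with $\fa$ but also with $H_{p+1}$ --- immediate from the explicit form of $Z_K(\fa)$, whose elements are block diagonal and act on the $(p{+}1)$-st coordinate only by a sign --- hence with all of $\fa^\sharp=\fa+\bR H_{p+1}$; therefore $Z_K(\fa)\subset Z_{K^\sharp}(\fa^\sharp)=K^\sharp\cap M^\sharp$. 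Combining, $MA=Z_K(\fa)M_0A\subset M^\sharp A^\sharp$ and $P=Z_K(\fa)P_0\subset P^\sharp$, while $M_0A\subset MA$ and $P_0\subset P$ hold by definition. That these inclusions are compatible with the identifications $MA\cong\GL_p$, $M^\sharp A^\sharp\cong\GL^+_{p+1}$ via $h\mapsto\diag(h,\det h)$ is exactly the compatibility spelled out in the text just before the lemma.

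Finally, the factorization \eqref{relation_n_nsharp}. The cleanest route avoids exponentiating $n_{(z,v)}$ by hand. From the block decomposition above, $n_{(z,v)}=l^\sharp_{(X,Y)}+n^\sharp_{M(z,v)}$ with $X+Y=\mu:=\left(\begin{smallmatrix}0&v\\0&0\end{smallmatrix}\right)$. A one-line block computation gives the adjoint action $[l^\sharp_{(X,Y)},n^\sharp_W]=n^\sharp_{gW+Wg^t}$ with $g=X+Y$, and with $g=\mu$, $W=M(z,v)$ one checks that $\mu M(z,v)+M(z,v)\mu^t=0$. Thus the two summands commute in $\fg^\sharp$, so
\[
\exp n_{(z,v)}=\exp\!\bigl(l^\sharp_{(X,Y)}\bigr)\cdot\exp\!\bigl(n^\sharp_{M(z,v)}\bigr),
\]
and under \eqref{isomorphism_GL_MA} the first factor is the image of $\exp\mu=\left(\begin{smallmatrix}I_p&v\\0&1\end{smallmatrix}\right)\in\SL_{p+1}$, i.e.\ the element $m$ of the statement; this is \eqref{relation_n_nsharp}. (Alternatively one can verify \eqref{relation_n_nsharp} by brute force: $n_{(z,v)}^3=0$ and $(n^\sharp_W)^2=0$ make both sides explicit finite sums, and one matches the four $(p{+}1)\times(p{+}1)$ blocks.)

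I do not expect any step to be genuinely difficult; the only real danger is bookkeeping. The delicate point throughout is the clash between the $p+1+p$ block structure natural for $\SO(p+1,p)$ and the $(p{+}1)+(p{+}1)$ block structure of $\fg^\sharp$: one must fix carefully how the coordinates of the $\SO(p+1,p)$-realization sit inside those of $\SO(p+1,p+1)$ --- in particular where the ``extra'' $(2p{+}2)$-nd coordinate goes --- and then rewrite $\diag(n_{(z,v)},0)$, $n^\sharp_W$ and $l^\sharp_{(X,Y)}$ with respect to the \emph{same} partition before matching coefficients. Once that dictionary is in place, the matrix identities above are short.
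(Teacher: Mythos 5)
Your proposal is correct and follows essentially the same route as the paper: both rest on the explicit decomposition $n_{(z,v)}=l^\sharp_{(X,Y)}+n^\sharp_{M(z,v)}$ with $X+Y=\left(\begin{smallmatrix}0&v\\0&0\end{smallmatrix}\right)\in\gl(p+1)$, from which the Lie-algebra inclusions and then the group inclusions follow. The only variation is in the last step: where the paper verifies \eqref{relation_n_nsharp} by direct matrix computation, you note that the two summands commute (since $\left(\begin{smallmatrix}0&v\\0&0\end{smallmatrix}\right)M(z,v)+M(z,v)\left(\begin{smallmatrix}0&v\\0&0\end{smallmatrix}\right)^t=0$), so the exponential of the sum factors — a slightly cleaner derivation of the same identity.
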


\begin{proof} The first relation is in (\ref{ma-ma}). 
We can write
$n_{(z,v)}=l^\sharp_{(X,Y)}+n^\sharp_W$ with:
$$
X=
\begin{pmatrix}
0&\frac 12 v\\ \frac 12 v^t&0
\end{pmatrix},
\quad  \quad
Y=
\begin{pmatrix}
0&\frac 12 v\\-\frac 12 v^t&0
\end{pmatrix},
\quad
W=
\begin{pmatrix}
z&\frac 12 v\\-\frac 12 v^t&0
\end{pmatrix}
\ .
$$
This shows $\fn \subset \fm^\sharp  \oplus \fn^\sharp$
and  implies $\fm\oplus \fa \oplus \fn \subset \fm^\sharp \oplus \fa^\sharp \oplus \fn^\sharp$.
The group inclusions follow immediately. 
Easy matrix computations give the equality \eqref{relation_n_nsharp}.
\end{proof}

Note that the adjoint
action of $MA=\GL_p^+$ and $\GL_p$ 
on $N$ is
\begin{equation}
\label{ma-on-n}
g\cdot (z, v)=(gzg^t, gv) \ .
\end{equation}

\section{Degenerate principal series representations}
\label{sec_deg_princ_series}

Throughout the paper we assume $p$ is odd and we write $2q=p+1$.

\medskip

\subsection{Principal series 
of $G^\sharp=\SO_0(p+1, p+1)$}\hfill\\ \vspace{-0.5em}

Let $\mu\in \bC$
and
consider the induced representation $I^\sharp(\mu)$ of $G^\sharp$
from the following character on $P^\sharp$
$$
\chi_\mu^\sharp:
me^{t\xi^\sharp}n \longmapsto e^{(\mu+\rho^\sharp) t}
\ ,
$$
where $\rho^\sharp=q(2q-1)$
is the half sum of the positive root
of $\ad \xi^\sharp$. In terms of $P^\sharp=\GL^+_{2q} 
N^\sharp$
this is 
\begin{equation}
\label{chi-mu}
\chi_\mu^\sharp:
l\, n \longmapsto \det(l)^{\frac{\mu+\rho^\sharp}{2q}},
\quad l\in \GL^+_{2q}.
\end{equation}

This representation  can be realized 
on the space  of Haar measurable functions $f(g)$
on $G^\sharp$ such that
\begin{equation}
\label{Isharp-mu}
f(gl
n)=\det(l)^{-\frac{\mu+\rho^\sharp}{2q}}
f(g), 
\,\, l\,n\in  P^\sharp,
\end{equation}
and
\begin{equation}
\label{Isharp-mu-}
f{\big |}_{K^\sharp}\in L^2(K^\sharp)
\ .
\end{equation}
See \cite{Kn-book}.
The group  $G^\sharp$ acts 
on $I^\sharp(\mu)$
  by the  left regular action and we denote
the representation by $(I^\sharp(\mu),  \pi^\sharp_\mu)$.
The condition \eqref{Isharp-mu} implies that $f \in I^\sharp(\mu)$
is invariant under the right action
of $K^\sharp\cap M^\sharp$ and 
can therefore be  identified as  functions on 
$K^\sharp/K^\sharp\cap M^\sharp$.
However
 $K^\sharp/K^\sharp\cap M^\sharp$ can be realized as $\SO(2q)$
since  the group $K^\sharp$ acts on $\SO(2q)$ by 
$$
K^\sharp \ni\diag(k_1,k_2): 
\left\{
\begin{array}{ccl}
\SO(2q)&\longrightarrow& \SO(2q)\\
 a &\longmapsto& k_1 a k_2^{-1}
 \end{array}
\right.
 \ ,
$$
and the isotropy group of the identity matrix $I_{2q}\in \SO(2q)$ is $K^\sharp\cap M^\sharp$.
Thus condition \eqref{Isharp-mu-} can be equivalently replaced by
$$
f\big{|}_{K^\sharp/K^\sharp\cap M^\sharp}\in 
L^2(K^\sharp/K^\sharp \cap M^\sharp)
=L^2(\SO(2q))
 \ .
$$

We denote
by $(I^\sharp_{K^\sharp}(\mu),  \pi^\sharp_\mu )$ 
the space of $K^\sharp$-finite
elements.  We will need its decomposition under $K^\sharp$.
Recall, see e.g. \cite{Johnson-mathann}, that 
each irreducible representation  of $\SO(2q)$ is
determined by a $q$-tuple of integers:
$$
\bm=(m_1, \cdots, m_q), \quad m_1\ge \cdots \ge m_{q-1}\ge |m_{q}| \ .
$$
We write $\cV_{\bm}$
for the representation space of $\bm$. 
Thus $I^\sharp_{K^\sharp}(\mu)
$ is the same as
the space $L^2(\SO(2q))_{K^\sharp}$
of $K^\sharp$-finite elements in $L^2(\SO(2q))$
and  we have \cite{Johnson-mathann}:
\begin{equation}
  \label{eq:L-2-K-1}
 I^\sharp_{K^\sharp}(\mu)\sim
L^2(\SO(2q))_{K^\sharp}=
\sum_{\bm}\cV_{\bm}\otimes \cV_{\bm}^\ast,
\end{equation}
where $\cV_{\bm}^\ast$ stands for the dual representation
of $\cV_{\bm}$.

Each function in the space $I^\sharp(\mu)$ is also uniquely
determined by its restriction to $\bar N^\sharp$. The $G^\sharp$-action
in this realization is referred as $\bar N^\sharp$-realization.
It is \cite[p.~169]{Kn-book} the space
$L^2(\bar N^\sharp, e^{2\Re (\mu +\rho^\sharp)H^\sharp}  )$
where the function $H^\sharp$ is defined  by $t=H^\sharp(\bar n)$ 
using the Iwasawa decomposition of
$\bar n= kme^{t\xi ^\sharp}n_+ \in K^\sharp P^\sharp $.
Note however that
the $L^2$-norm in $L^2(K)$ or $L^2(\bar N^\sharp,e^{2\Re (\mu +\rho^\sharp)H^\sharp} )$ is  
$G^\sharp$-invariant only
for purely imaginary $\mu$, $\mu\in i\bR$.

\medskip

\subsection{Zeta distribution and complementary series 
$\mathcal C^\sharp(\nu)$ of $G^\sharp$
}\label{subsec_zeta}\hfill\\ \vspace{-0.5em}

The unitarity of  $(I^\sharp(\mu),  \pi_\mu )$  for $\mu$ outside the
standard unitary range $\mu\in i\bR$ has
been completely determined in \cite{Johnson-mathann}
in the $K$-finite realization; see also
\cite{ Sahi-crelle, gkz-ma}. 
Let $\mu \in (0,q)$.
There exists a $\mathfrak g^\sharp$-invariant (i.e.
$\mathfrak g^\sharp$ acts as skew Hermitian operators)
positive definite
inner product $(\cdot, \cdot)_{\nu}$
on the space $I_{K^\sharp}^\sharp(\mu)$
of $K^\sharp$-finite elements of the principal series
of $I^\sharp(\mu)$ (see \cite[Theorem 7.5]{Johnson-mathann}
and \cite[section 7]{BSZ}).
We will denote
the unitary representation of $G^\sharp$
on the completion
of $I_{K^\sharp}^\sharp(\mu)$
 as $(\mathcal C^\sharp(\mu), \pi^\sharp_\mu)$.

The non-compact picture has been further studied in \cite{BSZ}
and  we recall it briefly here.
We identify  $\bar N^\sharp$ with $\cX_{2q}$ via $Z \mapsto \exp \bar n^\sharp_Z$
and we consider the following
(formally defined) linear form
on the Schwarz space $\mathcal S(\mathcal X_{2q})$
$$
\bfZ_s(h)
= \gamma_{s,2q}
\int_{\mathcal X_{2q}} h(x) \ |\Pf (x)|^{s} dx,
\quad\quad
\gamma_{s,2q}= 
\frac{\pi^{\frac q 2 (s +2q-1)}}
{\prod_{j=0}^{q-1}\Gamma(\frac {s +2q-1} 2 -j)} \ ,
$$
for $s\in \bC$ with sufficiently large $\Re s$
where $\Pf$ denotes the Pfaffian polynomial.
This defines \cite{bopp-rub, BSZ}
 a family of  tempered distributions $\{\bfZ_s\}$
 which admits a holomorphic  continuation to 
the whole complex plane
and whose Fourier transform satisfies the following functional equality:
\begin{equation}
\label{fourier-zeta}
{\bfZ}_{2q,s-(2q-1)} (\mathcal F h)
=\bfZ_{2q,-s }(h),
\quad \quad  \ s\in \bC\ ;
\end{equation}
here the Fourier 
transform on $\mathcal X_{2q}$ is
given by: 
$$
\cF h(\zeta)
=
\int_{\Mat_{2q}^{ss}} h(x) e^{2\pi i (x,\zeta)} dx,
\quad\quad h\in \cS(\cX_{2q})\ ,
$$
The inner product in the $N^\sharp$-realization is given by:
$$
(f, g)_{\mu}:= (f,\bfZ_s*g)_{L^2(\mathcal X)}
= \bfZ_s\big (f*\check g\big )
\quad\mbox{with}\quad
s=\frac \mu q - (2q-1)
\ ,
$$
where $\check g: x\mapsto \bar f(-x)$, initially
defined on the Schwarz space
$\cS(\cX_{2q})$.
From \eqref{fourier-zeta}, 
we see 
that 
$(f, f)_{\mu}=\bfZ_{-\frac \mu q}(|\cF f|^2)$ for any $f\in \cS(\cX_{2q})$;
this makes sense because for $\mu \in (0,q)$, $\bfZ_{-\frac \mu q}$ is a locally integrable function.
The completion of 
$ \cS(\cX_{2q})$ is
\begin{equation}
  \label{eq:H-mu}
\mathcal C^\sharp(\mu)=
\{ f \in \cS'(\cX_{2q});
\, \ |\Pf|^{-\frac \mu {2q}} \cF f \in L^2(\cX_{2q})
\},
\ 
\end{equation}
(Note that the condition $ f \in \cS'(\cX_{2q})$
can be deduced also from 
$\ |\Pf|^{-\frac \mu {2q}} \cF f \in L^2(\cX_{2q})$.)

We summarize the results in the following
\begin{prop} 
\label{coml-sharp}
Suppose $\mu \in (0,q)$.
 There exists 
a $(\fg^\sharp, \pi^\sharp)$-invariant positive
definite inner product 
 $(\cdot, \cdot)_{\mu}$ on 
$I_{K^\sharp}^\sharp(\mu)
$. Its completion
defines a unitary representation
of $G^\sharp$. In the non-compact realization 
the Hilbert space is  described by \eqref{eq:H-mu}.
\end{prop}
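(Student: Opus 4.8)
The plan is to stay in the non-compact $\bar N^\sharp$-model, $\bar N^\sharp\cong\cX_{2q}$, and to read off all three assertions from the zeta distribution $\{\bfZ_s\}$ together with its functional equation \eqref{fourier-zeta}. Fix $s=\frac\mu q-(2q-1)$, so that the form in question is $(f,g)_\mu=\bfZ_s\big(f*\check g\big)$ on $\cS(\cX_{2q})$, hence on $I^\sharp_{K^\sharp}(\mu)$. I would first argue $\fg^\sharp$-invariance. By the results recalled above (see \cite{bopp-rub,BSZ}), the convolution operator $A_\mu\colon f\mapsto \bfZ_s*f$ is a $G^\sharp$-intertwining map from $I^\sharp(\mu)$ to $I^\sharp(-\mu)$. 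Since $\mu$ is real, $f\mapsto\bar f$ preserves $I^\sharp(\mu)$ and the invariant bilinear pairing $I^\sharp(-\mu)\times I^\sharp(\mu)\to\bC$ given by integration over $K^\sharp/(K^\sharp\cap M^\sharp)$ (equivalently, the flat integral over $\bar N^\sharp\cong\cX_{2q}$, the $\rho^\sharp$-factors cancelling) is $G^\sharp$-invariant; composing, $(f,g)_\mu=\int_{\cX_{2q}}(A_\mu f)\,\bar g$ is a $\fg^\sharp$-invariant — indeed $G^\sharp$-invariant — sesquilinear form, and one identifies it with $\bfZ_s(f*\check g)$ by unwinding the definitions (using that $|\Pf|$ is even). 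Alternatively this step may simply be quoted from \cite[Theorem~7.5]{Johnson-mathann}, which produces exactly this inner product in the $K^\sharp$-picture.

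Next I would establish positive definiteness and identify the completion. Applying \eqref{fourier-zeta} with $s$ replaced by $-s$, together with $\widehat{f*\check g}=\cF f\cdot\overline{\cF g}$, gives $(f,g)_\mu=\bfZ_{-\mu/q}\big(\cF f\cdot\overline{\cF g}\big)$, as already observed above for $g=f$. For $\mu\in(0,q)$ one has $-\frac\mu q\in(-1,0)$, so $|\Pf|^{-\mu/q}$ is locally integrable on $\cX_{2q}$ (transverse to the smooth locus of $\{\Pf=0\}$ it behaves like $|t|^{-\mu/q}$), and $\bfZ_{-\mu/q}$ is the function $\gamma_{-\mu/q,2q}\,|\Pf|^{-\mu/q}$. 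The one computation I would carry out explicitly is that the constant is positive: each $\Gamma$-argument $\tfrac12\big(2q-1-\tfrac\mu q\big)-j$, $0\le j\le q-1$, is positive, the smallest being $\tfrac12\big(1-\tfrac\mu q\big)>0$, so the denominator of $\gamma_{-\mu/q,2q}$ is a product of positive numbers and $\gamma_{-\mu/q,2q}>0$. Hence $(f,f)_\mu=\gamma_{-\mu/q,2q}\int_{\cX_{2q}}|\Pf(x)|^{-\mu/q}\,|\cF f(x)|^2\,dx\ge 0$, vanishing only when $\cF f=0$ a.e., i.e.\ $f=0$; and since $\|f\|_\mu$ is a positive multiple of $\big\||\Pf|^{-\mu/(2q)}\cF f\big\|_{L^2(\cX_{2q})}$, the completion of $\cS(\cX_{2q})$ is precisely the space \eqref{eq:H-mu}.

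Finally I would promote the infinitesimal invariance to unitarity of the group action. Since the $K^\sharp$-finite vectors are dense and consist of analytic vectors on which $\fg^\sharp$ acts by skew-Hermitian operators, the representation integrates to a unitary representation of the connected group $G^\sharp$ on the completion; equivalently one can check directly in the $\cX_{2q}$-model that $\bar N^\sharp$ acts by isometries (translations become modulations after $\cF$), that $M^\sharp A^\sharp\cong\GL^+_{2q}$ acts by operators combining a linear substitution $Z\mapsto h^{-1}Zh^{-t}$ with a power of $|\det h|$, which preserve $\int|\Pf|^{-\mu/q}|\cF f|^2$ because $|\Pf(hZh^t)|=|\det h|\,|\Pf(Z)|$ and by the change-of-variables Jacobian, and that the $N^\sharp$-action is conjugate through $A_\mu$ (equivalently through $\cF$ and \eqref{fourier-zeta}) to the $\bar N^\sharp$-action at the parameter $-\mu$. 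The genuinely non-formal inputs are the holomorphic continuation of $\{\bfZ_s\}$ to all of $\bC$ and the functional equation \eqref{fourier-zeta}, both recalled above from \cite{bopp-rub,BSZ}; granting those, positivity on the whole interval $(0,q)$ is immediate from the sign of $\gamma_{-\mu/q,2q}$ and the local integrability of $|\Pf|^{-\mu/q}$, and the main point still requiring care — largely routine — is the standard functional-analytic passage from a $\fg^\sharp$-invariant inner product on $K^\sharp$-finite vectors to a bona fide unitary representation of $G^\sharp$ on the Hilbert space completion, together with matching it to Johnson's compact-picture construction.
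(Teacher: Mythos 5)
Your argument is correct and follows essentially the same route the paper takes: Proposition \ref{coml-sharp} is stated there as a summary of \cite[Theorem 7.5]{Johnson-mathann} and \cite{bopp-rub, BSZ} — invariance via the zeta/Knapp--Stein intertwiner paired against $I^\sharp(-\mu)$, positivity and the identification of the completion via the functional equation \eqref{fourier-zeta} together with the positivity of $\gamma_{-\mu/q,2q}$ and the local integrability of $|\Pf|^{-\mu/q}$ for $\mu\in(0,q)$, and the standard integration of a unitarizable admissible $(\fg^\sharp,K^\sharp)$-module to the group. The only substantive detail you add beyond what the paper records is the explicit verification that every Gamma-argument in $\gamma_{-\mu/q,2q}$ is positive, which is a correct and useful check rather than a deviation.
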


The operator
\begin{equation}
\label{F-mu}
\mathcal F_{\mu}: f\in \cS(\cX_{2q})  
\longmapsto \phi= \cF^{-1} \big(|\Pf|^{-\frac \mu {2q}} \cF f \big)\in L^2(\cX_{2q}),
\end{equation}
then extends to a unitary operator from
$\mathcal C^\sharp(\mu)$ onto $L^2(\cX_{2q})$.
We denote 
the corresponding representation on 
 $L^2(\cX_{2q})$
by
\begin{equation}
\label{def_tau_sharp}
\tilde \pi_\mu^\sharp=\mathcal F_\mu
\pi_\mu^\sharp\mathcal F_\mu^{-1}
\ .
\end{equation}
We obtain a simple description of the action of $(\bar P^\sharp,
\tilde \pi_\mu^\sharp)$ (see \cite[sections 7 and 8]{BSZ}):
\begin{equation}
  \label{eq:p-sharp-act-1}
\tilde \pi_\mu^\sharp
(\exp \bar n^\sharp_Z)  \phi (W)=\phi(W -Z)  
\end{equation}
and for $g=me^{t\xi^\sharp}\in M^\sharp A^\sharp$ identified with an element of $\GL_{p+1}^+$:
\begin{equation}
  \label{eq:p-sharp-act-2}
\tilde \pi_\mu^\sharp \phi (W)=e^{-\rho^\sharp t} \phi(g^t W g)
\ .
\end{equation}

\medskip

\subsection{Principal series $I(\nu)$ of $G$
}\hfill\\ \vspace{-0.5em} 

Let 
$I(\nu)$ be the principal series 
induced from the following character of $P=\GL_p N$:
$$
\chi_\nu: l\,n
\longmapsto 
|\det (l)|^{\frac{\nu+\rho}{p}}
\ ,
$$
where $\rho=p^2/2$ is the half sum of the positive roots of $\ad \xi$,
with the similar condition as
in  (\ref{Isharp-mu}) and (\ref{Isharp-mu-}).
That is, the representation space is realized as Haar measurable functions $f(g)$ on $G$ satisfying
\begin{equation}
\label{I-nu}
f(g\, l\,n)=
|\det (l)|^{-\frac{\nu+\rho}{p}}
f(g) \ ,
\end{equation}
and
\begin{equation}
\label{I-nu-}
f{\big |}_{K}\in L^2(K) 
\quad\mbox{or equivalently}\quad
f{\big |}_{K}\in L^2(K/K\cap M)
\ ;
\end{equation}
the group $G$ acts 
on $I(\nu)$ by the  left regular action
and we denote this action by $(I(\nu),  \pi_\nu)$.
We denote
by $(I_K(\nu),  \pi_\nu )$ 
the space of $K$-finite
elements.

The homogeneous space $K/K\cap M$
can be realized as the Stiefel manifold
of rank $p$ isometries: 
$$
K/K\cap M=V_{p+1, p}=\{x\in \Mat_{p+1, p}; x^t x=I_p\}
\ ,
$$
where the group $K$ acts transitively via:
$$
K\ni \diag(k_1,k_2): 
\left\{
\begin{array}{ccl}
V_{p+1,p}&\longrightarrow& V_{p+1,p} \\
x &\longmapsto& k_1 x k_2^{-1}
\end{array}
\right.
\ ,
$$ 
and $K\cap M$ is the isotropy group of $
\begin{pmatrix}I_p\\ 0\end{pmatrix}\in V_{p+1,p}$.
The elements $f$ in $I(\nu)$ then satisfy
$$
f{\big |}_{K/K\cap M}\in L^2(K/ K \cap M)=L^2(V_{p+1, p}).
$$

We will need the multiplicity
free  decomposition of 
$L^2(V_{p+1, p})$
 under $K_0= \SO(p+1)\times {\rm SO}(p)$.
Let us recall that
each irreducible  representation of  $\SO(p)$
is determined by a $(q-1)$-tuples of integers
$$
\bn=(n_1, \cdots, n_{q-1}), \quad n_1\ge \cdots \ge n_{q-1}\ge n_{q-1}\ge 0\ ,
$$
and we write $\cW_{\bn}$ for the representation space.
Given a representation $\bm$ of
$\SO(p+1)$ 
we write $\bm
\succeq \bn$
 if $\bn$ appears
in the irreducible decomposition
of $\bm$ under $\SO(p)$.
It is a classical result, see e.g. \cite{Gelbart, Vinberg-Kimelfeld, Kobayashi-korea}
that  $\bn$ appears in  $\bm$
multiplicity free. 
This implies that
the space of $K_0$-finite elements of $L^2(V_{p+1, p})$ is
decomposed under $K_0$ as follows:
\begin{equation}
  \label{eq:L-2-K}
I_K(\nu) \sim L^2(V_{p+1, p})_{K_0}=\sum_{(\bm, \bn): \bm^\ast \succeq \bn
}
\cV_{\bm}\otimes \cW_{\bn}
\ ,
\end{equation} 
and this decomposition is multiplicity free.

\medskip

\subsection{The restriction map $R$}\hfill\\ \vspace{-0.5em}

We shall consider simply the restriction
of functions  in $I^\sharp(\mu)$
to $G\subset G^\sharp$. To clarify
its definition we note first that
the space $I_{K^\sharp}^\sharp(\mu)$
of $K^\sharp$-finite functions are smooth functions
on $G^\sharp$. Thus the restriction map 
$$
R: I_{K^\sharp}^\sharp(\mu)
\longmapsto  C^\infty(G), \quad
Rf(g)=f(g),\, g\in G
$$
makes sense. In the $K^\sharp$-realization of $I^\sharp(\mu)$,
we have $Rf\in L^2(K)$ for any $K^\sharp$-finite
elements $f\in L^2(\SO(2q))$.

Our main observation  is the following:

\begin{prop} 
\label{prop_main} 
Let $\nu,\mu\in \bC$ such that $\nu=\frac p {p+1}\mu$. 
The restriction map $R$ is a $G$-equivariant
isomorphism
from $I_{K^\sharp}^\sharp(\mu)
$ onto $I_K(\nu)$ in the sense
that 
$$
R\pi_{\mu}^\sharp (g) f=
\pi_{\nu}(g) Rf, \quad f\in I^\sharp(\mu), \quad g\in G
\ ,
$$
and it is unitary as a map 
from  
$I_{K^\sharp}^\sharp(\mu)\sim L^2(\SO(2q))_{K^\sharp}$
onto $I_K(\nu) \sim L^2(V_{p+1, p})_{K_0}$.
\end{prop}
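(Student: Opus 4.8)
The plan is to establish the three assertions—that $R$ maps into $I_K(\nu)$, that it is $G$-equivariant, and that it is a unitary bijection onto $L^2(V_{p+1,p})_{K_0}$—in that order, reducing each to a comparison of the two parabolic subgroups via Lemma \ref{lem_inclusion}.

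\textit{Step 1: $R$ lands in $I_K(\nu)$ and is $G$-equivariant.} Equivariance is essentially automatic: the $G^\sharp$-action on $I^\sharp(\mu)$ is left translation, and restricting a function on $G^\sharp$ to the subgroup $G$ obviously intertwines left translation by $G$ on both sides, so $R\pi^\sharp_\mu(g)f = \pi_\nu(g)Rf$ once we know $Rf\in I(\nu)$. For the latter, I would check the transformation law \eqref{I-nu} directly. Take $f\in I^\sharp_{K^\sharp}(\mu)$, so $f(g'l'n')=\det(l')^{-(\mu+\rho^\sharp)/(2q)}f(g')$ for $l'n'\in P^\sharp$. Given $g\in G$ and $ln\in P$, I use the inclusion $P_0\subset P\subset P^\sharp$ together with the factorization \eqref{relation_n_nsharp}: writing $l\in \GL_p$ as $\diag(h,\det h)\in \GL_{p+1}^+$ under the identification, and $\exp n_{(z,v)} = m\exp n^\sharp_{M(z,v)}$ with $m\in M^\sharp$, I express $gln$ in $G^\sharp$ in the form $g\cdot \tilde l \cdot \tilde n$ with $\tilde l\tilde n\in P^\sharp$. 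The crucial bookkeeping is the determinant: under $h\in\GL_p \mapsto \diag(h,\det h)\in\GL_{p+1}^+$ one has $\det(\diag(h,\det h)) = \det(h)^2$ (here $p+1=2q$ is even, so $\det h$ can be negative and the image genuinely lands in $\GL_{p+1}^+$), and the $M^\sharp$-factor $m$ contributes determinant $1$. Hence the exponent $(\mu+\rho^\sharp)/(2q)$ acting on $\det(\tilde l) = \det(h)^2 = |\det l|^2$ should reproduce $(\nu+\rho)/p$ acting on $|\det l|$; matching exponents gives $2(\mu+\rho^\sharp)/(2q) = (\nu+\rho)/p$, i.e. $(\mu+\rho^\sharp)/q = (\nu+\rho)/p$. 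With $\rho^\sharp=q(2q-1)$, $\rho=p^2/2$, $p=2q-1$ this is exactly the relation $\nu=\tfrac{p}{p+1}\mu$; I would verify this arithmetic explicitly. (The absolute value $|\det l|$ versus $\det(l')$ causes no trouble since $\det(h)^2\ge 0$.) This shows $Rf$ satisfies \eqref{I-nu}, and $K^\sharp$-finiteness of $f$ forces $Rf|_K$ to be $K$-finite, hence in $L^2(K)$, so $Rf\in I_K(\nu)$.

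\textit{Step 2: $R$ is a bijection onto $I_K(\nu)$.} The cleanest route is through the $K$-picture. By \eqref{eq:L-2-K-1}, $I^\sharp_{K^\sharp}(\mu)$ as a $K^\sharp$-module is $L^2(\SO(2q))_{K^\sharp} = \bigoplus_\bm \cV_\bm\otimes\cV_\bm^\ast$. Restricting a function on $\SO(2q)\simeq K^\sharp/(K^\sharp\cap M^\sharp)$ to $K = \bZ_2\times K_0$ corresponds to restricting the right $\SO(2q)$-action (the second factor $\cV_\bm^\ast$) to $\SO(p)$ and the left action (the first factor $\cV_\bm$) to $\SO(p+1)$; since $V_{p+1,p} = K/(K\cap M)$ with $K$ acting through $\SO(p+1)\times\SO(p)$ exactly as $K^\sharp$ acts on $\SO(2q)$ but with the right factor collapsed, one gets precisely the branching $\cV_\bm^\ast \downarrow \SO(p) = \bigoplus_{\bm^\ast\succeq\bn}\cW_\bn$ (multiplicity free, by the classical result cited) together with $\cV_\bm\downarrow\SO(p+1)=\cV_\bm$. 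Summing over $\bm$ reproduces $\bigoplus_{(\bm,\bn):\bm^\ast\succeq\bn}\cV_\bm\otimes\cW_\bn = L^2(V_{p+1,p})_{K_0}$, which is \eqref{eq:L-2-K}. Thus $R$ is, $K_0$-isotypic component by $K_0$-isotypic component, an isomorphism of the two sides; in particular it is injective and surjective onto $I_K(\nu)$.

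\textit{Step 3: unitarity.} Both spaces carry $L^2$ inner products coming from $L^2(K^\sharp/K^\sharp\cap M^\sharp)=L^2(\SO(2q))$ and $L^2(K/K\cap M)=L^2(V_{p+1,p})$ respectively. The point is that the $K^\sharp$-equivariant identification of $I^\sharp_{K^\sharp}(\mu)$ with $L^2(\SO(2q))$ restricts, via $R$, to the $K_0$-equivariant identification of $I_K(\nu)$ with $L^2(V_{p+1,p})$, and restriction of functions corresponds to the natural surjection of homogeneous spaces $V_{p+1,p}\to \SO(2q)$—or rather, the appropriate measure-theoretic pullback. Concretely, I would argue componentwise: on the $\bm$-summand, both inner products are (up to normalization) the canonical $\SO(p+1)\times\SO(p)$-invariant Hermitian forms on $\cV_\bm\otimes\cW_\bn$, and since multiplicity-free decompositions have no ambiguity in how invariant forms sit, $R$ restricted to each summand is a scalar multiple of an isometry; the scalars are all equal to $1$ once one fixes compatible normalizations of Haar measures on $\SO(2q)$ and $V_{p+1,p}$ (equivalently, one checks on a single vector, e.g. the spherical vector in each $\cV_\bm\otimes\cW_{\mathbf 0}$-type summand, that norms match). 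Putting the three steps together gives the claim.

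\textit{Main obstacle.} The delicate point is Step 2/Step 3: making precise that restriction of functions from $\SO(2q)\simeq K^\sharp/(K^\sharp\cap M^\sharp)$ to $V_{p+1,p}\simeq K/(K\cap M)$ is implemented correctly and is an isometry for the right normalization—in other words, identifying $V_{p+1,p}$ with the image of $K$ inside $\SO(2q)$ under the $K^\sharp$-action and checking that this is consistent with the claimed branching rules and with the Haar measures. The representation-theoretic combinatorics (that $\cV_\bm^\ast\downarrow\SO(p)$ is multiplicity-free with the stated support) is quoted, so the real work is geometric/measure-theoretic bookkeeping rather than hard analysis; the Lie-algebra inclusions of Lemma \ref{lem_inclusion} do the rest for the equivariance in Step 1.
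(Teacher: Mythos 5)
Your overall strategy coincides with the paper's (whose proof is essentially a three-line version of your Steps 1--3: check the character identity $\chi^\sharp_\mu=\chi_\nu$ on $P\subset P^\sharp$, then invoke the decompositions \eqref{eq:L-2-K-1} and \eqref{eq:L-2-K} for bijectivity and unitarity). Steps 2 and 3 are sound. The problem is in Step 1, precisely at the point you yourself flagged as the ``crucial bookkeeping'' and deferred: the arithmetic you assert does \emph{not} give $\nu=\frac{p}{p+1}\mu$. From your identity $\frac{\mu+\rho^\sharp}{q}=\frac{\nu+\rho}{p}$, substituting $\rho^\sharp=q(2q-1)=qp$ and $\rho=p^2/2$, one gets $\nu=\frac{2p}{p+1}\mu+\frac{p^2}{2}$ --- an affine, not linear, relation with the wrong slope. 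So with your normalization $\det\tilde l=(\det h)^2$ the two characters do not agree for the stated parameters, and the transformation law \eqref{I-nu} would fail.

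The source of the discrepancy is the embedding of $MA$ into $M^\sharp A^\sharp=\GL^+_{p+1}$. The paper's phrase ``restricting to $\{\diag(h,\det h)\}$'' is misleading (arguably a slip in the paper that you inherited): the subgroup $\{\diag(h,\det h):h\in\GL_p\}$ has Lie algebra $\{\diag(a,\Tr a)\}$, whose $\fa$-direction is $\bR\,\diag(I_p,p)$, whereas the inclusion \eqref{ma-ma} sends $\fm\oplus\fa\sim\gl(p)$ to $\{\diag(a,0)\}\subset\gl(p+1)$, with $\fa$-direction $\bR\,\diag(I_p,0)$; these are different subgroups of $M^\sharp A^\sharp$. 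The element of $\GL^+_{p+1}$ actually corresponding to $\ell\in MA\sim\GL_p$ is $\diag(h,\sgn\det h)$ (the $M_0A$-part contributes $\diag(g,1)$, the $Z_K(\fa)$-part contributes $\diag(k,\det k)$ with $k\in{\rm O}(p)$), whose determinant is $|\det h|$, not $(\det h)^2$. With this, and using that the $M^\sharp$-factor in \eqref{relation_n_nsharp} has determinant $1$ as you noted, one gets $\chi^\sharp_\mu(\ell n)=|\det h|^{\frac{\mu+\rho^\sharp}{2q}}$; matching with $\chi_\nu(\ell n)=|\det h|^{\frac{\nu+\rho}{p}}$ gives $\frac{\mu+\rho^\sharp}{2q}=\frac{\nu+\rho}{p}$, which upon substituting $\rho^\sharp=qp$ and $\rho=p^2/2$ is exactly $\nu=\frac{p}{p+1}\mu$. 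Redo the bookkeeping with this embedding; the rest of your argument then goes through.
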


\begin{proof}
Let $f\in I^\sharp_{K^\sharp}(\mu)$.
By Lemma \ref{lem_inclusion},
one check easily for $ln\in P\subset P^\sharp$
$\chi_\mu^\sharp (ln) 
=\chi_\nu(ln)$. Together with  \eqref{Isharp-mu}, it implies that $Rf$ satisfies \eqref{I-nu}. Moreover \eqref{Isharp-mu-} implies \eqref{I-nu-}.
So $Rf\in I(\nu)$ and 
$R\pi_{\mu}^\sharp (g) f=
\pi_{\nu}(g) Rf$ for any $g\in G$.
 As $f$ is $K^\sharp$-finite, $Rf$ is also $K$-finite.

The decompositions \eqref{eq:L-2-K}
and \eqref{eq:L-2-K-1} show the rest of the claim.
\end{proof}

Using Proposition \ref{coml-sharp}  we get 
that restriction to $G$ of the complementary series $\mathcal C^\sharp(\mu)$
defines a unitarizable representation of $G$,
which we write as 
$\mathcal C (\nu)$, whose $K$-finite
elements are the same as $I_K(\nu)$, 
by Proposition \ref{prop_main},
i.e,
$$
\mathcal C(\nu)=R\mathcal C^\sharp(\mu), \quad
\mathcal C_K(\nu)=I_K(\mu), \quad
\nu=\frac p {p+1}\mu, \quad  \mu\in (0,q).
$$ 

The main result of this paper is the following theorem which states that 
restriction $\mathcal C(\nu)
$ to the maximal parabolic subgroup
$\bar P$  of $G$ is irreducible.

\begin{thrm} 
\label{thm_main} 
Let $\nu=\frac p {p+1}\mu$ with  $\mu\in(0,q)$.
Then restriction to $G$ of the complementary
series $(\cC^\sharp(\mu), G^\sharp)$ 
defines a unitarizable irreducible representation
$\cC(\nu)$. It
is the unitarization of the 
principal series representation $(I_K(\nu), G)$ realized in the non-compact picture.
Moreover, 
its remains irreducible when restricted to 
the maximal parabolic subgroup $\GL_p \, \bar N=\bar P$.
\end{thrm}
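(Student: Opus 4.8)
The plan is to transport everything to the non-compact model $L^2(\cX_{2q})$ provided by the unitary intertwiner $\mathcal F_\mu$ of \eqref{F-mu}, where by \eqref{eq:p-sharp-act-1}–\eqref{eq:p-sharp-act-2} the group $\bar P^\sharp = \GL_{p+1}^+ \bar N^\sharp$ acts very simply: $\bar N^\sharp$ by translations in $\cX_{2q}$ and $M^\sharp A^\sharp$ by the congruence action $\phi(W)\mapsto e^{-\rho^\sharp t}\phi(g^t W g)$. By Lemma \ref{lem_inclusion}, the subgroup $\bar P = \GL_p\,\bar N$ of $G$ sits inside $\bar P^\sharp$: the decomposition \eqref{relation_n_nsharp} shows that $\exp\bar n_{(z,v)}$ acts as the $M^\sharp$-element coming from $\begin{pmatrix} I_p & v\\ 0 & 1\end{pmatrix}$ followed by the $\bar N^\sharp$-translation by $-M(z,v)$, while $\GL_p \hookrightarrow \GL_{p+1}^+$ via $h\mapsto \diag(h,\det h)$ acts by the corresponding congruence. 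So the restricted representation $\tilde\pi_\nu := \mathcal F_\mu \circ (R\pi^\sharp_\mu)\circ\mathcal F_\mu^{-1}$ on $L^2(\cX_{2q})$ is given by explicit translations and $\GL_p$-congruences, and the whole problem becomes: show that $L^2(\cX_{2q})$ has no nontrivial closed subspace invariant under these operators.

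The key step is to apply the Euclidean Fourier transform $\cF$ on $\cX_{2q}$. Under $\cF$, the $\bar N$-part of $\bar P$ acts essentially by multiplication operators: the translations by $-M(z,v)$ become multiplication by the unitary characters $\zeta\mapsto e^{-2\pi i(M(z,v),\zeta)}$, and the "shear" $M^\sharp$-factors coming from $v$ act by the congruence $\zeta\mapsto \begin{pmatrix} I_p & 0\\ -v^t & 1\end{pmatrix}\zeta\begin{pmatrix}I_p & -v\\ 0 & 1\end{pmatrix}$ (up to the Jacobian, which is trivial here since this lies in $\SL_{p+1}$). A closed subspace invariant under all the multiplications by $e^{-2\pi i(M(z,v),\zeta)}$ for $z\in\cX_p$, $v\in\bR^p$ must, by the standard argument for translation-invariant subspaces (Wiener / spectral theorem for the algebra of multiplication operators), be of the form $L^2(E)$ for a measurable set $E\subset\cX_{2q}$ that is (up to null sets) a union of cosets of the annihilator of the span of $\{M(z,v)\}$. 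Since $M(z,v)$ ranges over the subspace $\left\{\begin{pmatrix} z & \tfrac12 v\\ -\tfrac12 v^t & 0\end{pmatrix}\right\}$ of $\cX_{2q}$ — which has codimension equal to $\dim\cX_{2q}-\dim\cX_p - p = \binom{2q}{2}-\binom{2q-1}{2}-(2q-1)=0$, i.e. it is \emph{all} of $\cX_{2q}$ — the annihilator is trivial and $E$ must be, up to measure zero, invariant under arbitrary translations; hence $E$ is null or co-null. Wait: the span of $\{M(z,v)\}$ inside $\cX_{2q}$ — one must check it is all of $\cX_{2q}$; the $(1,1)$-block gives all of $\cX_p$, the $(1,2)$ block gives all of $\bR^p=\bR^{2q-1}$, and $\dim\cX_p + p = \binom{2q-1}{2}+(2q-1)=\binom{2q}{2}=\dim\cX_{2q}$, so indeed the span is everything and the only $\bar N$-invariant closed subspaces are $0$ and $L^2(\cX_{2q})$.

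Since $\bar N$-invariant closed subspaces are already trivial, $\bar P$-irreducibility — and a fortiori $G$-irreducibility and $G_0$-irreducibility — follows immediately; the $\GL_p$-congruence action is not even needed for irreducibility, only to identify the decomposition under $\bar N\Sp(q-1,\bR)$ promised in the introduction. The remaining assertions of the theorem (that $\cC(\nu)$ is unitarizable and that its $K$-finite vectors form $I_K(\nu)$) are exactly Propositions \ref{coml-sharp} and \ref{prop_main}, so nothing new is required there. \textbf{The main obstacle} I anticipate is purely bookkeeping-technical rather than conceptual: one must verify carefully that conjugating the $\bar P$-action through $\mathcal F_\mu$ and then through $\cF$ really does produce the clean "character-multiplication plus measure-preserving congruence" form — in particular that the Pfaffian weight $|\Pf|^{-\mu/2q}$ built into $\mathcal F_\mu$ intertwines correctly with the $\GL_p$-congruences (it does, because $\Pf(g^tWg)=\det(g)\Pf(W)$ and $\det$ of the relevant $\SL_{p+1}$-shears is $1$), and that the spectral-subspace argument is applied to the right commutative von Neumann algebra. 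Making the "union of cosets of the annihilator" step rigorous when the translations are parametrized by a possibly non-closed additive subgroup of $\cX_{2q}$ (here it is actually a linear subspace, so it \emph{is} closed, which is what makes the argument go through cleanly) is the one point that deserves a careful line or two in the full proof.
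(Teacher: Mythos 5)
Your reduction to the non-compact picture and the observation that everything hinges on showing that $L^2(\cX_{2q})$ has no nontrivial closed subspace invariant under $\bar P$ are both correct and coincide with the paper's set-up (Lemma \ref{lem_pi} and Proposition \ref{prop_schur}). But the central step of your argument --- that invariance under $\bar N$ alone already forces the subspace to be $0$ or everything --- fails, for two independent reasons.

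First, $\bar N$ does not act on $L^2(\cX_{2q})$ by pure translations. By \eqref{relation_n_nsharp}, $\exp\bar n_{(z,v)}$ factors in $\bar P^\sharp$ as an $M^\sharp$-shear times the $\bar N^\sharp$-translation by $M(z,v)$, so its action is translation \emph{composed with} the congruence $W\mapsto h^tWh$ with $h=\begin{pmatrix}I_p&v\\0&1\end{pmatrix}$; the composite is exactly the left regular representation of the two-step nilpotent group $N_p$ on $L^2(N_p)$ (this is the content of Lemma \ref{lem_pi}). Only the central elements ($v=0$) act by pure translations, and these span only $\cX_p\subset\cX_{2q}$, whose annihilator has dimension $2q-1>0$. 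You cannot use the translation by $M(z,v)$ separately from the accompanying shear, because the shear by itself is not an element of $\bar P$.

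Second, even if the action were by pure translations spanning all of $\cX_{2q}$ (as it is for the larger group $\bar N^\sharp$), the conclusion would still be wrong: by Wiener's theorem the closed translation-invariant subspaces of $L^2$ are exactly $\cF^{-1}L^2(E)$ for \emph{arbitrary} measurable $E$; the characters generate a maximal abelian von Neumann algebra whose commutant is all of $L^\infty$, so every $L^2(E)$ is invariant. Your inference ``the annihilator is trivial, hence $E$ is invariant under arbitrary translations, hence null or co-null'' conflates the set $E$ in frequency space (which only has to be a union of cosets of the annihilator --- a vacuous condition when the annihilator is $\{0\}$) with translation-invariance of $E$ itself. Even for $\bar P^\sharp$, irreducibility requires the $\GL^+_{p+1}$-congruences to move the sets $E$ around; it never follows from the translations alone.

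Consequently the commutant of $\pi(\bar N)$ is large (the algebra of right convolutions, decomposed by the group Plancherel formula over the generic coadjoint orbits $\cO$), and the proof must use the $\GL_p$-part in an essential way. The paper uses transitivity of $\SO(p)\Delta$ on $\cO$ to reduce any commuting operator $T$ to a single fiber operator $\hat T(o_{\fn^*})$, then the metaplectic representation of the stabilizer $\Sp(q-1,\bR)$ to force $\hat T(o_{\fn^*})=c_0I+c_1U$ with $U$ the parity operator, and finally a Fourier-support argument (the convolution kernel of the resulting operator $T_1$ has Euclidean Fourier support in $\overline{\cO}$, which is strictly smaller than $\Omega=\GL_p\cdot o_{\fn^*}$ and hence not $\GL_p$-invariant) to conclude $c_1=0$. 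None of these steps is dispensable, and none appears in your proposal.
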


The irreducibility 
under $G_0$ in
the above statement is
essentially proved in \cite{Lee-Loke}.
Indeed let $\tilde K=\text{Spin}(p+1)
\times \text{Spin}(p)$.
The representation
$\mathcal C (\nu)$
 is treated
as representation of 
$\text{Spin}(p+1, p)$ and it is proved
\cite[12.2.1]{Lee-Loke}
that the $(\fg, \tilde K)$-module
of $\mathcal C (\nu)$
 is irreducible.
However the representations of $\tilde K$
in $\mathcal C (\nu)$ descend
to the representation
of $K_0$ and thus the 
$(\fg, \tilde K)$-module
is the same as $(\fg, K)$-module
$\mathcal C_K (\nu)$, and the latter is
then irreducible.

To prove the rest of Theorem \ref{thm_main}, we will use the non-compact picture.
As $\bar P\subset \bar P^\sharp$ we can
find the action of $\bar P$ on 
$(\tilde \pi_\mu^\sharp,L^2(\cX_{2q}))$:
\begin{lem}
\label{lem_pi}
The representation of $\bar P$ on
$(\tilde \pi_\mu^\sharp,L^2(\cX_{2q}))$ is unitarily equivalent to the representation 
$(\pi, L^2(N_p))$ given by:
\begin{equation}
  \label{eq:fou-n-act}
{\pi}
(\bar n_0 )
\cdot \phi (\bar n)=\phi (\bar n_0^{-1} \bar n)  
\end{equation}
for an element $n_0\in \bar N$
and for an element $g\in \GL_p$:
\begin{equation}
  \label{eq:fou-ma-act}
{\pi}
(g) \phi( \bar n)=|\det g|^{-\frac p2}\phi(g^{-1} \cdot  \bar n)
\ ,
\end{equation}
 where the action of $\GL_p$ on $\bar N$ is given by 
(\ref{ma-on-n}).
\end{lem}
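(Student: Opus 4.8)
The plan is to start from the explicit $\bar P^\sharp$-action of $\tilde\pi_\mu^\sharp$ on $L^2(\cX_{2q})$ given in \eqref{eq:p-sharp-act-1}--\eqref{eq:p-sharp-act-2}, and simply restrict it to the subgroup $\bar P = \GL_p\,\bar N \subset \bar P^\sharp$. The first step is to compute where $\bar P$ sits inside $\bar P^\sharp$. For the $\GL_p$-part, the identification from \eqref{isomorphism_GL_MA} realizes $MA \cong \GL_p$ via $h \mapsto \diag(h,\det h) \in \GL_{p+1}^+$; transposing as in \eqref{eq:p-sharp-act-2} sends $W \mapsto g^t W g$ where now $g = \diag(h, \det h)$. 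For the $\bar N$-part I will transpose the factorization \eqref{relation_n_nsharp} from Lemma \ref{lem_inclusion}: writing $\bar n_{(z,v)} = \exp n_{(z,v)}^t$, we get $\bar n_{(z,v)} = \exp(\bar n^\sharp_{M(z,v)})\, \bar m$ for a lower-triangular $\bar m \in \bar M^\sharp$ corresponding to $\begin{pmatrix} I_p & 0 \\ v^t & 1\end{pmatrix}$, with $M(z,v)$ as in \eqref{M(z,v)}. So the $\bar N$-action on $L^2(\cX_{2q})$ is a translation by $M(z,v)$ composed with the linear $MA^\sharp$-action coming from $\bar m$.

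The second step is to exhibit the unitary intertwiner to $(\pi, L^2(N_p))$. The key observation is that the map $(z,v) \mapsto M(z,v)$ identifies $\bar N \cong \cX_p \oplus \bR^p$ with an affine (in fact linear) slice of $\cX_{2q}$, and that $\cX_{2q}$ decomposes, under the $\GL_p$-action just described, into this $N_p$-slice and a complementary piece. Concretely, writing $W \in \cX_{2q}$ in block form $W = \begin{pmatrix} z & u \\ -u^t & 0\end{pmatrix}$ with $z \in \cX_p$, $u \in \bR^p$ (the bottom-right entry is forced to vanish by skew-symmetry), one sees $\cX_{2q} \cong \cX_p \oplus \bR^p$ as a \emph{whole}, not just a slice --- the "complementary piece" is trivial here because $2q = p+1$. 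Under this linear isomorphism, the translation-by-$M(z,v)$ action becomes exactly the left-translation \eqref{eq:fou-n-act} on $N_p \cong \cX_p \oplus \bR^p$ (up to checking the sign/inverse conventions, which is where the $\bar n_0^{-1}$ comes from, since left regular action on the group side corresponds to the inverse shift on the model side), and one must verify the residual linear action of the $\bar m$-factor is absorbed correctly. For the $\GL_p$-action: $g = \diag(h,\det h)$ acts by $W \mapsto g^t W g$, which on the block decomposition sends $z \mapsto h^t z h$ and $u \mapsto \det(h)\, h^t u$; comparing with \eqref{ma-on-n} (which gives $(z,v)\mapsto (hzh^t, hv)$ on $\bar N$ after accounting for transpose, i.e. $g^{-1}$ acting) and tracking the modular factor $e^{-\rho^\sharp t}$ against $|\det g|^{-p/2}$, using $\rho^\sharp = q(2q-1)$, $2q = p+1$, and $\det\diag(h,\det h) = \det(h)^2$, the exponents should match up to give precisely \eqref{eq:fou-ma-act}.

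The third and final step is the bookkeeping: assembling the two computations into a single unitary map $U: L^2(\cX_{2q}) \to L^2(N_p)$ (which is essentially the identity under the block identification $\cX_{2q} \cong \cX_p\oplus\bR^p \cong N_p$, so it is manifestly an isometry for the right normalization of Haar/Lebesgue measure) and checking $U \tilde\pi_\mu^\sharp(\bar n_0) = \pi(\bar n_0) U$ and $U\tilde\pi_\mu^\sharp(g) = \pi(g) U$ for $g \in \GL_p$. The main obstacle I anticipate is not conceptual but combinatorial: correctly reconciling the various transpose/inverse conventions --- $\tilde\pi_\mu^\sharp$ is built from the \emph{left} regular representation but acts on the $\bar N^\sharp$-picture where things get transposed, and the factorization \eqref{relation_n_nsharp} must be transposed and its $m$-factor tracked through \eqref{eq:p-sharp-act-2} --- and making sure the modular/Jacobian factors from the $\GL_p$-action on $\cX_{2q}$ collapse exactly to $|\det g|^{-p/2}$ rather than some other power. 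Once the factorization \eqref{relation_n_nsharp} is transposed and the $2q = p+1$ coincidence is used to see that $M(\cdot,\cdot): \bar N \to \cX_{2q}$ is a linear \emph{isomorphism} (not merely an embedding), the rest is routine verification.
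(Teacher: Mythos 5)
Your plan is the paper's proof: the unitary intertwiner is composition with the linear isomorphism $(z,v)\mapsto M(z,v)$ of $\cX_p\oplus\bR^p$ onto $\cX_{2q}$ (using $2q=p+1$), the $\bar N$-action is read off from the transposed factorization \eqref{relation_n_nsharp} with the residual $m$-factor pushed through \eqref{eq:p-sharp-act-2}, and the $\GL_p$-action is a direct restriction of \eqref{eq:p-sharp-act-2}. So the strategy is correct and identical to the paper's (which disposes of \eqref{eq:fou-ma-act} with ``easy to check'').

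One concrete step in your outline would not close as written: the modular-factor count. Taking $MA=\{\diag(h,\det h)\}$ literally, as you propose, gives $\det\diag(h,\det h)=(\det h)^2=e^{2qt}$, hence $e^{-\rho^\sharp t}=|\det h|^{-(2q-1)}=|\det h|^{-p}$ --- the \emph{square} of the required factor $|\det g|^{-p/2}$ in \eqref{eq:fou-ma-act}; and, as you yourself note, the $v$-component picks up an extra $\det(h)$ ($u\mapsto\det(h)\,h^tu$) that \eqref{ma-on-n} cannot absorb. The resolution is that the description of $MA$ inside $M^\sharp A^\sharp\cong\GL^+_{p+1}$ given in Section \ref{sec_G_P} is itself off: the Lie algebra $\fm\oplus\fa$ sits in $\gl(p+1)$ as $\{\diag(g,0)\}$ (for instance $\xi\leftrightarrow\diag(I_p,0)$), so $M_0A=\{\diag(h,1):h\in\GL_p^+\}$ and $MA=\{\diag(h,\sgn\det h):h\in\GL_p\}$. (This is also forced by consistency of $\chi^\sharp_\mu|_P=\chi_\nu$ with $\nu=\frac{p}{p+1}\mu$ in Proposition \ref{prop_main}: one needs $\det_{p+1}(l)=|\det_p(l)|$, not $(\det_p l)^2$.) With this embedding $e^{-\rho^\sharp t}=|\det h|^{-(2q-1)/2}=|\det h|^{-p/2}$ exactly, and the $v$-component transforms by $u\mapsto\pm h^tu$, matching \eqref{ma-on-n} on the identity component (the residual sign on the non-identity component of $\GL_p$ is harmless for the unitary equivalence). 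Once you replace $\diag(h,\det h)$ by $\diag(h,\sgn\det h)$, the rest of your verification goes through as planned.
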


\begin{proof}
Let us consider the unitary isomorphism:
$$
\begin{array}{rcl}
L^2(N_p)&\longrightarrow&L^2(\cX_{2q})
\\
\psi & \longmapsto& \phi
\end{array}
\quad\mbox{given by} \quad
\psi(z,v)=\phi(M(z,v))
\ .
$$
It is easy to check \eqref{eq:fou-ma-act}.
Now for $(z_o,v_o)\in N_p$,
using 
\eqref{relation_n_nsharp},
we compute:
\begin{eqnarray*}
\left(\exp \bar n^\sharp_{M(z_o,v_o)} m_h^\sharp\right) 
\cdot \phi \big( M(z,v)\big)
=
\phi(-M(z_o,v_o) + h^t M(z,v) h)
\end{eqnarray*}
and direct computations show
$$
-M(z_o,v_o) + h^t M(z,v) h = M\left((z_o,v_o)^{-1} (z,v)\right)
$$
so the action of $\bar N$ is given by \eqref{eq:fou-n-act}.
\end{proof}

So by Schur's Lemma, 
Theorem \ref{thm_main} is proved once we have shown the following proposition:
\begin{prop}
\label{prop_schur}
Let $T$ be a bounded operator on $L^2(\bar N)$ commuting with the action 
$\pi$ of $\bar P$
defined in Lemma \ref{lem_pi}.
Then $T$ is the scalar multiple of the identity.
\end{prop}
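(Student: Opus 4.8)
The plan is to exploit the two pieces of the action $\pi$ of $\bar P = \GL_p\,\bar N$ separately: first use the translation action of the nilpotent group $\bar N$ to normalize $T$, then use the dilation-type action of $\GL_p$ to pin down the remaining freedom. Since $\bar N$ acts on $L^2(\bar N)$ by left translation (via \eqref{eq:fou-n-act}), an operator $T$ commuting with this action is, by the commutation theorem for the left regular representation, given by right convolution by a (tempered) distribution, or more concretely is a Fourier multiplier once we pass to the group Fourier transform of the $2$-step nilpotent group $\bar N$. I would take the group Fourier transform on $\bar N \cong \cX_p \oplus \bR^p$ (the free $2$-step nilpotent Lie group on $p$ generators): its unitary dual is parametrized, via Kirillov theory, by the generic coadjoint orbits indexed by $\lambda \in \cX_p^*$ (with $\lambda$ of full rank, recalling $p=2q-1$ is odd so the generic skew form $\lambda$ has a one-dimensional radical) together with a character on that radical, plus a lower-dimensional set of non-generic orbits. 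On the generic part the representation is a twisted Schrödinger-type representation on $L^2(\bR^{q-1})$, and $T$ becomes a measurable field of operators $T(\lambda)$ on these fibres.

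The second step is to feed in the $\GL_p$-covariance \eqref{eq:fou-ma-act}. The adjoint action \eqref{ma-on-n}, $g\cdot(z,v) = (gzg^t, gv)$, induces a transitive action of $\GL_p$ on the open dense set of full-rank $\lambda$'s (by the contragredient action $\lambda \mapsto g^{-t}\lambda g^{-1}$ on the skew form, the stabilizer being the symplectic-type subgroup $\Sp(q-1,\bR)$ appearing in the paper's introduction). Conjugating $T$ by $\pi(g)$ and using that $T$ commutes with all of $\bar P$ forces the field $\lambda \mapsto T(\lambda)$ to be equivariant under this transitive $\GL_p$-action; hence $T(\lambda)$ is determined by its value $T(\lambda_0)$ at a single base point, and that value must be an intertwiner for the stabilizer's action on the fibre $L^2(\bR^{q-1})$. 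One then checks that this stabilizer acts irreducibly on the fibre — essentially because the fibre representation is (a version of) the oscillator/Weil representation of $\Sp(q-1,\bR)$ together with the Heisenberg translations, which is irreducible — so Schur forces $T(\lambda_0)$, and therefore $T(\lambda)$ for a.e. $\lambda$, to be a scalar $c(\lambda)$. Equivariance of $\lambda \mapsto c(\lambda)$ under the transitive $\GL_p$-action then forces $c$ to be constant, giving $T = c\cdot \mathrm{Id}$. The non-generic (measure-zero in $\lambda$, but not negligible as a set in $\bar N^*$) orbits must be handled too, but a bounded operator commuting with left translation is supported, spectrally, according to a Plancherel measure that is carried by the generic set, so these contribute nothing.

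I expect the main obstacle to be making the Kirillov/Fourier-theoretic decomposition of the commutant of the left regular representation of $\bar N$ clean enough to combine with the $\GL_p$-covariance — in particular, verifying carefully that the $\GL_p$-action on the generic coadjoint orbit space is transitive with stabilizer acting irreducibly on each Plancherel fibre, and that the disintegration of $T$ is compatible with conjugation by $\pi(g)$ for $g \in \GL_p$ (a measurability/essential-uniqueness issue). An alternative, possibly more elementary route that avoids explicit Kirillov theory would be to work directly on $L^2(\bar N)$: from $\bar N$-covariance write $T$ as convolution by a distribution $S$ on $\bar N$; then the $\GL_p$-covariance \eqref{eq:fou-ma-act} says $S$ is a homogeneous distribution invariant under $g\cdot(z,v)=(gzg^t,gv)$ up to the Jacobian factor $|\det g|^{\pm}$; since $\GL_p$ has an open dense orbit on $\bar N$ (again using $p$ odd), such an $S$ must be supported at the identity, i.e.\ $S$ is a finite combination of derivatives of the delta, and boundedness of $T$ on $L^2$ then kills all but the constant term. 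This second approach trades representation theory of $\Sp(q-1,\bR)$ for a homogeneous-distribution-on-an-open-orbit argument; I would present whichever makes the boundedness step cleanest, and the hard point in either case is controlling what happens off the open/generic orbit.
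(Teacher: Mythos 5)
Your first route is essentially the paper's route up to the decisive step, and that is exactly where it breaks. After disintegrating $T$ into a measurable field $\hat T(w)$ of right multipliers via the group Plancherel theorem and using $\GL_p$-equivariance to reduce everything to the base point $o_{\fn^*}$, the only constraint left on $\hat T(o_{\fn^*})$ is that it commute with the metaplectic representation $\tau$ of (the double cover of) the stabilizer $\Sp(q-1,\bR)$ on the fibre $L^2(\bR^{q-1})$. That representation is \emph{not} irreducible: it splits into the two inequivalent subrepresentations on even and odd functions, so Schur only gives $\hat T(o_{\fn^*}) = c_0 I + c_1 U$ with $U$ the parity operator $Uh(x)=h(-x)$. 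Your attempt to upgrade to irreducibility by adjoining ``the Heisenberg translations'' does not work: the multiplier $\hat T(w)$ acts by right multiplication on $\hat f(w)$ and therefore commutes with the left action $\lambda_w(\bar N)$ automatically, so the translations impose no constraint on $\hat T(w)$ beyond the multiplier structure you have already used. Killing the coefficient $c_1$ is the real content of the proposition, and the paper does it by a separate argument: the operator $T_1$ with $\hat T_1(w)=A_w^{-1}UA_w$ is convolution by a kernel $\kappa$ satisfying $\Tr\bigl(\hat f(w)U\bigr)=C\,\cF f(w)$, so $\cF\kappa(\cdot^{-1})$ is supported in $\overline{\cO}$; since the set $\cO$ of representatives (those $(z,v)$ with $zv=0$ and $\det M(z,v)\neq 0$) is strictly smaller than the full orbit $\Omega=\GL_p\cdot o_{\fn^*}$, this support is not $\GL_p$-invariant and $T_1$ cannot commute with $\pi(\GL_p)$. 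Nothing in your proposal supplies this step.

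Your ``more elementary'' alternative also contains a false step: a distribution on $\bar N$ that is relatively invariant under a group action with an open dense orbit need not be supported off that orbit or at the identity (on $\bR$ under dilations, $|x|^{s}$ is homogeneous and supported everywhere; on $\bar N$ itself the Knapp--Stein kernels $\det(z+\tfrac12 vv^t)^{s}$ mentioned in the paper's remark are relatively invariant convolution kernels living on the open orbit). One could try to salvage this by classifying the relatively invariant distributions with the exact character forced by \eqref{eq:fou-ma-act} --- at most one up to scalar on the open orbit, plus whatever is supported on the singular set --- and then invoking $L^2$-boundedness, but that is a genuinely different and unwritten argument, not a shortcut; as it stands the claim that $S$ must be a combination of derivatives of the delta is a non sequitur.
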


\noindent {\it Remark:} 
Using the non-compact pictures, 
one can show the unitarity of $C(\nu)$.
Indeed using Lemma \ref{lem_inclusion}
it is not difficult to show that the Knapp-Stein intertwiner 
$A_\mu^\sharp$ and $A_\nu^\sharp$
for the series $I^\sharp(\mu)$ and $I(\nu)$
satisfies:
\begin{equation}
\label{intertwiners}
A_\mu^\sharp(f^\sharp)(\exp \bar n^\sharp_{M(z,v)})
=
A_\nu(f^\sharp|_G)(\exp \bar n_{(z,v)})
\ ,
\end{equation}
and the properties of $A_\mu^\sharp$ described in  \cite{BSZ}
imply that $I_\nu$ is unitarizable.

Knapp-Stein intertwiners are (nilpotent) convolution operator with very singular kernels. 
By \cite{BSZ}, $A_\mu$ is an abelian convolution with 
 a power of the Pfaffian.
It can be computed using geometric means that the kernel of $A_\nu$ is of the form $Q(z,v)^s=\det (z+\frac 12 vv^t)^s$
 for $(z,v)\in  \bar N$ and it is only through some elementary but tricky matrix computations that it can be linked with the abelian convolution with some power of the Pfaffian as in \eqref{intertwiners}.

\medskip

\section{Proof of Proposition \ref{prop_schur}}

We will use some well-known results
for the Plancherel formula
and von Neumann algebras of
left regular representations
of $\bar N$ on $L^2(\bar N)$; see \cite[Chapt.14]{Wallach} and \cite{Dixmier}
for general locally compact groups
\cite{Corwin-Gr}
for the case of nilpotent groups.
We describe first
the support
of the Plancherel measure described in \cite{BJR, mythesis, stri}.

\subsection{The support of the Plancherel measure}

To ease notation we write elements
in $\bar\fn$  or $\bar N$ as $n$
instead of $\bar n$.  
We may identify them with elements of $\cX_{2q}$
by \eqref{M(z,v)};
on $\cX_{2q}$ we consider the standard inner product $(Z,W)=\frac 12 \Tr ZW^*$. Hence we have equipped $\fn$ with an inner product
and we can now identify the dual  $\fn^\ast$
with $\fn$. The dual action
of $g\in \GL_p$ on $\fn^\ast=\fn$
will be written as $g \ast n$. 

We fix a generic point
 $o_{\fn^*}=(z_{o_{\fn^*}}, v_{o_{\fn^*}})$,
 the element of $\fn^*\sim \fn$ defined 
using \eqref{M(z,v)} by:
$$
M(o_{\fn^*})=J_q
\quad\mbox{where}\quad
J_q=\diag (J,\ldots,J)\in M_{2q}
\quad\mbox{and}\quad
J= \begin{pmatrix}
0&1 \\
-1&0
 \end{pmatrix}
  \ .
$$
 
 It is easy to see that a representative of a coadjoint orbit can be chosen of the form
$(z, v)\in \fn_p$ with $zv=0$, 
that is, the vector $v$ being in the kernel of the matrix $z$.
Let $\cO$ be the collection of those representatives $(z,v)$ 
with $M(z,v)$ non-singular:
$$
\cO:=\{ (z,v)\in \fn_p^*\ , \ zv=0 \quad\mbox{and}\quad \det M(z,v)\not=0
\}
\ .
$$ 
It is easy to see that $\cO$ is the following union of $\SO(p)$-orbits of certain "diagonal representatives":
$$
\mathcal O:
= \SO(p) \Delta \ast o_{\fn^*}
\quad\mbox{where}\quad
\Delta=\{\diag ( d_1 I_2,\ldots, d_{q-1} I_2, d_q) \ , \ d_j\in \bR^*\}\subset\GL_p 
\ .
$$

Any $w\in \mathcal O
\subset \fn^\ast$ then induces
an irreducible unitary representation $\lambda_w$
of $\bar \fn$ and $\bar   N$
on $L^2(\bR^{q-1})_w\sim L^2(\bR^{q-1})$.
We describe the representation of $\fn$
 very briefly  for 
the element $o_{\fn^*}= (z_{o_{\fn^*}}, v_{o_{\fn^*}})$.
The construction for general $w\in 
\cO$ can be done similarly by using the equivariant
action of $\GL_p$ on $\fn^\ast$.
As the writing of $w\in \cO$ as $\GL_p \cdot o_{\fn^*}$ is not unique,
there is a certain ambiguity here but it is harmless for our proof.

The element $o_{\fn^*}$ defines
a splitting (or a complex structure) of
$\bR^{2(q-1)} =
\bR^{q-1} +
\bR^{q-1}$.
The space $\fn$ is decomposed as
\begin{equation}
\label{n-deco}
\fn =
\cX_p
 \oplus \bR^p  
=
\fn_0 \oplus \fh
\end{equation}
where 
$\fn_0:=
(z_{o_{\fn^*}}^{\perp}\cap \cX_p)  \oplus
\bR v_{o_{\fn^*}}$
while $\fh:=\bR z_{o_{\fn^*}}
+ \bR^{q-1} +
\bR^{q-1} $ is the  
Heisenberg algebra. 
There exists a unique representation 
$(\lambda_{o_{\fn^*}}, L^2(\bR^{q-1}))$
of $N$ 
whose restriction to $\exp \bR o_{\fn^*}$ is given 
by the character
$ \exp i2\pi o_{\fn^*}$.

The Plancherel formula for $L^2(N)$ is given by
\begin{equation}
  \label{eq:pl-fo}
\Vert f\Vert^2_{L^2(\bar N)}
=\int_{\mathcal O } \Vert \hat f (w)
\Vert_{2}^2 d\iota(w),
\quad \quad
f(0)
=\int_{\mathcal O } 
 \Tr(\hat f (w))
 d\iota(w) \ ,  
\end{equation}
where we have denoted the group Fourier transform of a function $f$ by
$$
\hat f(w)=\int_N f(g)\lambda_w(g) dg
\ ,
$$
and its Hilbert-Schmidt norm by $\Vert \hat f (w)
\Vert_{2}$
i.e. in $HS_w:=L^2(\bR^{q-1}_w)
\otimes L^2(\bR^{q-1}_w)^\ast
$.
The Plancherel measure $d\iota$ can
be explicitly computed but we will not need
it here.
In otherwords, the regular action of $N\times N$
on $L^2(N)$ is decomposed as
\begin{equation}
  \label{eq:bi-deco}
L^2(N) = \int_\cO \lambda_w \otimes \lambda_w^\ast d \iota (w)
\ ,  
\end{equation}
where $\lambda_w^\ast$ is the contragradient
of $\lambda_2$.
\subsection{Proof of Proposition \ref{prop_schur}}
\label{subsec_proof_prop_schur}
Let $\Omega:=\{(z, v)\fn; \ \det(M(z, v))\ne 0\}$.
Clearly 
$\Omega$ is open and dense in $\fn$, and
which strictly contained in $\Omega\subsetneq \cO$.
By elementary matrix computations,
it can be also described as the orbit of $o_{\fn^*}$:
\begin{lem}
\label{lem_omega} $GL_p$ acts transitively on $\Omega$
and we have
$\Omega=\GL_p\cdot o_{\fn^*}=
\GL_p / \Sp(q-1, \bR)$.
\end{lem}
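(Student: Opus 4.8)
The plan is to prove that $\GL_p$ acts transitively on $\Omega$ with stabilizer of $o_{\fn^*}$ isomorphic to $\Sp(q-1,\bR)$. Recall that $M(z,v)=\left[\begin{smallmatrix} z & \frac12 v \\ -\frac12 v^t & 0\end{smallmatrix}\right]\in\cX_{2q}$, and that by \eqref{ma-on-n} the action of $g\in\GL_p$ on $(z,v)\in\fn$ is $g\cdot(z,v)=(gzg^t,gv)$. First I would record how this action transforms $M(z,v)$: a direct block computation gives $M(g\cdot(z,v))=\left[\begin{smallmatrix} g & 0 \\ 0 & 1\end{smallmatrix}\right] M(z,v)\left[\begin{smallmatrix} g^t & 0 \\ 0 & 1\end{smallmatrix}\right]$, i.e. conjugation of the skew form $M(z,v)$ by the block-diagonal matrix $\diag(g,1)\in\GL_{p+1}$. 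In particular $\det M(g\cdot(z,v))=(\det g)^2\det M(z,v)$, so the condition $\det M(z,v)\ne0$ is preserved, confirming that $\GL_p$ acts on $\Omega$.

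Next I would show transitivity. Since $M(o_{\fn^*})=J_q$ is the standard symplectic form on $\bR^{2q}$, and for any $(z,v)\in\Omega$ the matrix $M(z,v)$ is a nondegenerate skew-symmetric bilinear form on $\bR^{2q}$, the classification of symplectic forms over $\bR$ gives an element $h\in\GL_{2q}$ with $h^t M(z,v) h = J_q$. The point is that one can always arrange $h$ to have the block form $\diag(g,1)$ with $g\in\GL_p$: the last coordinate is distinguished because the $(2q,2q)$ entry of $M(z,v)$ is $0$ and — more importantly — the bottom row of $M(z,v)$ is $(-\tfrac12 v^t,0)$, which up to scaling and the $\GL_p$-action on the first $p$ coordinates can be normalized. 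Concretely, I would run the standard symplectic Gram–Schmidt process starting by choosing the last basis vector $e_{2q}$ fixed, pairing it with a suitable vector in $\bR^p=\mathrm{span}(e_1,\dots,e_p)$ (possible since $v\ne0$, which follows from $zv=0$ for a generic representative together with nondegeneracy, or one reduces to a diagonal representative in $\Delta\ast o_{\fn^*}$ first via Lemma for $\cO$), and then completing within $\bR^p$ only. This produces $g\in\GL_p$ with $g\cdot(z,v)=o_{\fn^*}$, giving $\Omega=\GL_p\cdot o_{\fn^*}$. Alternatively, and perhaps more cleanly, I would first invoke the description $\cO=\SO(p)\Delta\ast o_{\fn^*}$ already established, observe $\Omega\subset\cO$, and note that on the diagonal representatives $\diag(d_1 I_2,\dots,d_{q-1}I_2,d_q)\ast o_{\fn^*}$ the element $\diag(d_1 I_2,\dots,d_{q-1}I_2,d_q)\in\Delta\subset\GL_p$ already moves the point back to $o_{\fn^*}$, so every point of $\Omega$ is in the $\GL_p$-orbit of $o_{\fn^*}$.

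Finally I would identify the stabilizer. An element $g\in\GL_p$ fixes $o_{\fn^*}$ iff $\diag(g,1)^t J_q\diag(g,1)=J_q$, i.e. iff $\diag(g,1)\in\Sp(2q,\bR)$. Since such a block-diagonal matrix automatically preserves the line $\bR e_{2q}$ and its $J_q$-orthogonal complement $\bR^{2q-2}=\mathrm{span}(e_1,\dots,e_{2q-2})$ on which $J_q$ restricts to the standard symplectic form of rank $2q-2$, the condition reduces to $g|_{\bR^{2q-2}}\in\Sp(2q-2,\bR)=\Sp(q-1,\bR)$ together with $g e_{2q-1}=e_{2q-1}$ (forced since $g$ fixes $e_{2q}$ and preserves the pairing); here $p=2q-1$ so the $(2q-1)$st coordinate of the $\GL_p$-action is the fixed odd direction. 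Thus the stabilizer is exactly $\Sp(q-1,\bR)$, embedded block-diagonally, and $\Omega\cong\GL_p/\Sp(q-1,\bR)$. The main obstacle is the bookkeeping in the transitivity step — making sure the normalizing matrix can be taken in $\GL_p$ rather than all of $\GL_{2q}$ or $\GL_{p+1}$ — but this is exactly what the prior description $\cO=\SO(p)\Delta\ast o_{\fn^*}$ sidesteps, so I would lean on that rather than redo the symplectic normal form by hand.
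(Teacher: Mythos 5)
Your congruence formula $M(g\cdot(z,v))=\diag(g,1)\,M(z,v)\,\diag(g,1)^t$ and your identification of the stabilizer of $o_{\fn^*}$ as $\Sp(q-1,\bR)$ are correct and agree with the paper. The serious problem is the transitivity step, and precisely in the route you say you would actually take. The inclusion you invoke is backwards: by definition $\cO$ consists of the $(z,v)$ satisfying \emph{both} $\det M(z,v)\ne0$ \emph{and} the extra constraint $zv=0$, so $\cO\subsetneq\Omega$, not $\Omega\subset\cO$. (The set $\cO$ lies inside the proper algebraic subvariety $\{zv=0\}$ of $\fn$, whereas $\Omega$ is open and dense; the garbled sentence ``$\Omega\subsetneq\cO$'' just before the lemma is a typo, as the proof of Proposition \ref{prop_schur} later relies on $\cO$ being \emph{strictly} contained in $\Omega=\GL_p\cdot o_{\fn^*}$ — if one had $\Omega\subseteq\cO$, then $\overline{\cO}$ would be $\GL_p$-invariant and that whole argument would collapse.) So leaning on $\cO=\SO(p)\Delta\ast o_{\fn^*}$ proves transitivity only on the thin set $\cO$, not on $\Omega$; the points of $\Omega$ with $zv\ne0$, which are generic, are exactly the ones left untreated.

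Your first route (symplectic Gram--Schmidt with $e_{2q}$ frozen) can be made to work, but the phrase ``completing within $\bR^p$ only'' hides the entire content of the lemma, namely the second use of the hypothesis $\det M(z,v)\ne0$. Concretely: the candidate complement inside $\bR^p$ is $V_0=v^\perp\cap\bR^p$ (since $\omega(x,e_{2q})=\tfrac12 v^tx$ for $x\in\bR^p$), on which $\omega$ restricts to the form $x^tzy$; you must check (a) that $z|_{V_0}$ is nondegenerate, and (b) that the partner $u\in\bR^p$ of $e_{2q}$ can be chosen with $v^tu=2$ \emph{and} $\omega(u,V_0)=0$. Neither is automatic: (a) holds because $x\in V_0$ with $zx\in\bR v$ yields a kernel vector of $M(z,v)$, and (b) holds because $\ker z$ (nonzero, as $p$ is odd) cannot lie in $v^\perp$ for the same reason, so one may take $u\in\ker z$. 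Without these verifications the symplectic complement of $\mathrm{span}(u,e_{2q})$ need not sit inside $\bR^p$ and the induction does not close. (Also, $v\ne0$ follows at once from $\det M(z,v)\ne0$, since the last row of $M(z,v)$ is $(-\tfrac12 v^t,0)$; no appeal to $zv=0$ is needed.) The paper sidesteps all of this by first rotating $z$ into $2\times2$-block normal form by an element of $\SO(p)$, reading off $\det M(w,u)=(w_1\cdots w_{q-1}u_p)^2$ to see that all blocks and the last component of $u$ are nonzero, and then exhibiting an explicit block-lower-triangular $g\in\GL_p$ carrying $(w,u)$ to $o_{\fn^*}$; I would follow that concrete route rather than the one you propose to lean on.
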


\begin{proof}
Let $(z, v)\in \Omega$.
The diagonalization
of $z$ provides a $g\in SO(p)$ such that
$g\cdot (z, v)=(w, u)$ with $u=(u_1, \cdots, u_p)$
and
$$
w=\diag (\begin{pmatrix}
0 & w_1\\
-w_1 &0 \end{pmatrix}, \cdots,
\begin{pmatrix} 0 & w_{q-1}\\
-w_{q-1} & 0 \end{pmatrix})
\ .
$$
We compute 
on the one hand 
$\det M(w, u)= (w_1\cdots w_{q-1} u_p)^2$
and on  the other hand
$\det 
=\det M(g(z, v))
=\det g \det M(z, v) =
 \det M(z, v) \ne 0$. Namely $u_p, w_1, \dots, w_n \ne 0$. 
We solve now the equation 
$g(w, u)=o_{\fn^*}$ with $g\in GL_p$
of the form
$g=\begin{pmatrix}
A &0\\
B & c\end{pmatrix}$
viz,
$$
AZ_1A^t=J_0
\quad ,\quad 
AZ_1B^t+ AYc=0,
\quad\quad 
 cu_p=1
 \ ,
$$
which is easy to see to have a solution, e.g.
by taking $c=v_p^{-1}$, $B=-v_p^{-1}Z_1^{-1} Y$ and
$$
A=\diag(\sgn w_1|w_1|^\frac 12,
|w_1|^\frac 12 , \cdots, 
\sgn w_{q-1}|z_{q-1}|^\frac 12,
|z_1|^\frac 12) 
\ .
$$

The isotropic subgroup in $\GL_p$
of $o_{\fn^*}$
consists
of $g=\begin{pmatrix}
A &C\\
B & D\end{pmatrix}\in GL_p$
with $B=0, C=0, D=1$ and $A$ such that
$A J_{q-1} A^t=J_{q-1}$. Thus it is a realization of the symplectic group
$\Sp(q-1, \bR)$ and we have
$\Omega=\GL_p / \Sp(q-1, \bR)$.
\end{proof}

There exists \cite{folland} 
a representation 
$(\tau, L^2(\bR^{q-1}), 
\Mp(q-1, \bR))$
 of  the double
cover $\Mp(q-1, \bR)$ of $\Sp(q-1, \bR)$ such that
\begin{equation}
 \label{metap}
\tau (\tilde g) \lambda_{o_{\fn^*}}(n)
\tau (\tilde g)^\ast 
= \lambda_{o_{\fn^*}} (g\cdot n),
\quad \quad
\tilde g\in \Mp(q-1, \bR)\ ,
\end{equation}
with $\tilde g\in
 \Mp(q-1, \bR) \mapsto g\in \Sp(q-1, \bR)
$ the double covering.
Furthermore the representation 
$(\tau, Mp(q-1, \bR), L^2(\bR^{q-1}))$ is
a sum of
two irreducible inequivalent
representations \cite[Theorem 4.56]{folland}, 
\begin{equation}
 \label{ev-od}
L^2(\bR^{q-1})=L^2_0(\bR^{q-1})\oplus 
L^2_1(\bR^{q-1})
\end{equation}
of even an odd functions.

\medskip

We can now prove Proposition \ref{prop_schur}.
\begin{proof}[Proof of Proposition \ref{prop_schur}]
Let $T$ be a bounded operator on $L^2(N)$ commuting with the action 
$\pi$ of $\bar P$
defined in Lemma \ref{lem_pi}.

As $T$ commutes with the left translation,
by the Plancherel Theorem \cite{Dixmier},
 there exists a measurable field $\{\hat T(w), w\in \cO\}$
of bounded operators on $L^2(\bR^{q-1}_w)$
such that
\begin{equation}
\label{nil-for-def}
\widehat{Tf}(w) =
 \hat f(w) \hat T(w),
 \quad \quad f\in \cS(N) \ ;
\end{equation}
this measurable field of operator is unique (up to a $\iota$-negligible set).

Let $w= g \ast o_{\fn^*}$ with $g\in \SO(p)\Delta$. 
By the orbit method, the representations $\lambda_w$ 
and $n\mapsto \lambda_{o_{\fn^*}} (gn)$ are unitarily equivalent:
there exists a unitary operator $A_w$ such that 
$A_w\lambda_w (n)=  \lambda_{o_{\fn^*}} (gn) A_w$
and, for $f\in\cS(N)$, we compute with the change of variable $n_1=gn$:
\begin{eqnarray}
\hat f(w)
&=& \int_N f(n) A_w^{-1} \lambda_{o_{\fn^*}} (gn)  A_wdn
=  A_w^{-1} \int_N \big(\pi (g)\big) f(n_1) \lambda_{o_{\fn^*}} (n_1) dn_1 A_w \nonumber\\
&=&
A_w^{-1} \widehat{\pi (g)f}(o_{\fn^*})A_w \label{hatf_Aw}
\ .
\end{eqnarray}
Now as $T$ commutes with $\pi(g)$, 
we obtain easily:
$$
\widehat{Tf} (w) = 
A_w^{-1} \left(T \big(\pi (g)f\big)\right)\widehat{\ }
\, (o_{\fn^*})A_w \ .
$$
Using \eqref{nil-for-def} and the uniqueness of $\{\hat T(w), w\in \cO\}$,
we obtain for $\iota$-almost all $w\in \cO$,
\begin{equation}
\label{hatT_Aw}
\hat T (w) =
A_w^{-1} \hat T(o_{\fn^*}) A_w 
\ .
\end{equation}
We may assume that $\hat T(o_{\fn^*}) $ exists and that  relation \eqref{hatT_Aw} holds for all $w\in \cO$. 

In the same way, we consider $\tilde g\in \Mp(q-1, \bR)$
and the equivalence relation \eqref{metap}.
Proceeding  just as above, we obtain:
\begin{equation}
\label{hatT_tau}
\hat T(o_{\fn^*})=
\tau(\tilde g)\hat T(o_{\fn^*}) 
\tau(\tilde g)^{-1}
\ .
\end{equation}
It follows then from the irreducible decomposition  (\ref{ev-od})
that
$\hat T(o_{\fn^*})$ is constant on each space, namely 
$$\hat T(o_{\fn^*}) = c_0 I  + c_1 U
$$
where  
$U$ is
 the reflection,
$$
U h(x) =h(-x)
\quad, \quad h\in L^2(\bR^{q-1}) \ , \ x\in \bR^{q-1}
\ .
$$

By the Plancherel Theorem \cite{Dixmier},
there exists a bounded operator $T_1:L^2(N)\rightarrow L^2(N)$ 
which commutes with the left translations and satisfies
\begin{equation}
\label{def_T1}
 \widehat {T_1f} (w) =  \hat f (w)A_w^{-1} U A_w
\ , 
\end{equation}
  for all $w\in \cO$ and $f\in \cS(N)$.
Because of \eqref{hatT_Aw} and \eqref{hatT_tau},
we have $T=c_0 I +c_1 T_1$.
So the proof of Proposition \ref{prop_schur} will be over once we have shown that $c_1=0$
and for this it suffices to show that $T_1$ does not commute with the action of $(\pi, \GL_p)$.

As $T_1$ is bounded on $L^2(N)$ and commutes with left translation,
it is a convolution operator with a tempered kernel:
there exists $\kappa \in \cS'(N)$ such that $T_1f=f*\kappa$ for any $f\in \cS(N)$.
We claim that $\kappa$ is not invariant under $\GL_p$
and this shows that $T_1$ does not commute with the action of $(\pi, \GL_p)$.

To show our claim, 
we first compute  $T_1f(0)$
for $f\in \cS(\bar N)$.
For this we will use the Plancherel formula  (see \eqref{eq:pl-fo}):
\begin{equation}
\label{T1f0}
T_1f(0)
=\int_{\cO} \Tr (\widehat{T_1f}(w))
\ .
\end{equation}
Now by \eqref{hatf_Aw} and \eqref {def_T1},
for $w= g\ast o_{\fn^*}$,
 we have:
 \begin{equation}
\label{tr_T1fw}
\Tr \left(\widehat{T_1f}(w)\right)=
\Tr \left(\widehat{f}(w) A^{-1} U A_w\right)=
\Tr \left(A_w\widehat{f}(w) A^{-1} U \right)=
\Tr \left(\widehat{\pi(g) f}( o_{\fn^*})  U \right)
\ .
\end{equation}
So we just want to compute 
the trace of 
$\widehat{ f}( o_{\fn^*})  U$
 on the Hilbert space
$L^2(\bR^{q-1})$.
This can be derived from the standard
formulas (\cite[Chapt. XII, \S6]{Stein}, \cite[Chapt. II, \S2-\S3]{veluma}) 
for the Weyl transform.

Indeed
considering the decomposition (\ref{n-deco}),
we write the elements of $\fn$ as $h+h^\perp$ where $h\in \fh$ and $h^\perp\in \fn_0$. 
Integrating $f$ over $\fn_0$, we obtain the function $F$ with $\fh$:
$$
F(h)=
\int_{\fn_0} f(h+h^\perp)  dh^\perp.
$$
We now identify $\fh$ with the Heisenberg group and we write  the elements of $\fh$ as $h=(x,y,t)\in \bR^{q-1}\times\bR^{q-1}\times\bR$.
It is clear that $\widehat{ f}( o_{\fn^*})$ coincides with the Schr\"odinger representation of the Heisenberg group at $F$. 
From \cite[Chapt. XII, \S6.3]{Stein}, this shows that
$\widehat{ f}( o_{\fn^*})$ is the integral operator on $L^2(\bR^{q-1})$ with kernel
$$
K_\fh(x,y):=c\int_{\bR^{q-1} \times \bR} 
e^{2\pi i (\frac 12 u\cdot (y+x) +  t)}
F(u, y-x,t) du dt
\quad, \quad x,y\in \bR^{q-1},
$$
where $c=c_q$ is a known constant
(our $t$ here corresponds to $\frac 14 t$
in \cite[Chapt.~XII, \S6.3, (91)]{Stein}).
So the kernel of 
the operator $\widehat{ f}( o_{\fn^*})  U$ is
$K_\fh (-x,y)$ and we can now compute the trace of the operator:
\begin{eqnarray*}
\Tr\left(\widehat{ f}( o_{\fn^*})  U \right)
&=&
\int_{\bR^{q-1}} K_\fh(-x,x) dx
\\
&=&
c\int_{\bR^{q-1}}\!
\int_{\bR^{q-1} \times \bR\!\!\!\!
} 
e^{2\pi i  t }
F(u, 2x,       t) du dtdx
\ .
\end{eqnarray*}
Thus we have obtained:
$$
\Tr\left(\widehat{f}( o_{\fn^*})  U \right)
 = C (\cF f)(o_{\fn^*}) \ , 
$$
for some  non-zero known constant $C$
where we have denoted by $\cF$ the Euclidean Fourier transform on $\fn$,
that is,
$$
\cF f(\zeta ,\nu) = \int_{\fn} f(z,v) e^{2i\pi (\langle z,\zeta\rangle +\langle v,\nu\rangle)} dz dv
\ ,
$$
where we have used the canonical Euclidean scalar products on $\mathcal X_{q-1}$ and $\bR^{q-1}$.

Now by \eqref{tr_T1fw},
this shows that for any $w\in \cO$, we have:
$$
\Tr\left(\widehat{f}( w)  U \right)
 = C (\cF f)(w)
 \ .
$$
By  \eqref{T1f0},
we obtain:
$$
\int f(n) \kappa (n^{-1}) dn = 
T_1f(0) = C \int_\cO \cF (w) d\iota (w)
\ .
$$
Hence the support of $\cF \kappa (\cdot ^{-1})$ is included in 
$\overline{\cO}$.
But $\overline{\cO}$ is invariant under $n\mapsto n^{-1}$ but not invariant under $\GL_p$ since $\cO$ is strictly included in $\Omega=\GL_p\cdot o_{\fn^*}$
(see Lemma \ref{lem_omega}).
So $\kappa$ is not $\GL_p$-invariant.
This shows that $T_1$ does not commute with $(\pi,\GL_p)$
and concludes the proof of Proposition \ref{prop_schur}.
\end{proof}

\subsection{Decomposition of $L^2(N)$ under $\bar N\times \Sp(q-1, \mathbb R)$}
We note that the above proof also yields
a decomposition of $L^2(N)$ under the action
of $\bar N\times \Sp(q-1, \mathbb R)$.
Consider first the reference point $w=o_{\fn^*}$
and the space $L^2(\mathbb R^{q-1})\otimes
L^2(\mathbb R^{q-1})$ with $N$ acting
on the left factor by $\lambda_w$. Note
that the metaplectic representation $\tau$
on $L^2(\mathbb R^{q-1})$, by its definition,
defines a unitary representation of $\tau\otimes \tau^\ast$
of $\Sp(q-1, \mathbb R)$ on 
$L^2(\mathbb R^{q-1})\otimes
L^2(\mathbb R^{q-1})$,
 viewed as Hilbert-Schmidt operators, by
$$
(\tau\otimes \tau^\ast)(g)
T=\tau(g) T \tau^\ast(g) \ .
$$
The group $\bar N\times \Sp(q-1, \mathbb R)$ acts on 
 $L^2(\mathbb R^{q-1})\otimes L^2(\mathbb R^{q-1})$
 and we denote the corresponding representation, 
 by
$\lambda_\omega\rtimes \tau_\omega$.

Using the decomposition of $L^2(\bR^{q-1})$ into even and odd functions $L^2(\bR^{q-1})_{i}$, $i=0,1$, 
we have:
$$
L^2(\mathbb R^{q-1})\otimes
L^2(\mathbb R^{q-1}) = 
L^2(\mathbb R^{q-1})\otimes
L^2(\mathbb R^{q-1})_0 \oplus
L^2(\mathbb R^{q-1})\otimes
L^2(\mathbb R^{q-1})_1, 
$$
and we obtain the $\bar N\times \Sp(q-1, \mathbb R)$-irreducible decomposition:
$$
\lambda_\omega\rtimes \tau_\omega=
(\lambda_\omega\rtimes \tau_\omega)_0 +(\lambda_\omega\rtimes \tau_\omega)_1 \ .
$$
Clearly this
construction can be done for any $\omega$. We have then
\begin{cor} 
The space $L^2(N)$ is decomposed 
under $\bar N\times \Sp(q-1, \mathbb R)$
as 
$$
L^2(N) = \int_\cO 
(\lambda_\omega\rtimes \tau_\omega)_0 +(\lambda_\omega\rtimes \tau_\omega)_1 \
d \iota (w)\ .
$$
\end{cor}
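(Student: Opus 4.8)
The plan is to read the decomposition directly off the Plancherel formula \eqref{eq:bi-deco} for $L^2(N)$ together with the metaplectic intertwiner \eqref{metap}, reusing the computations already assembled in the proof of Proposition \ref{prop_schur}. First I would recall that \eqref{eq:bi-deco} realizes $L^2(N)$ as the direct integral $\int_\cO HS_w\, d\iota(w)$ with $HS_w=L^2(\bR^{q-1}_w)\otimes L^2(\bR^{q-1}_w)^\ast$, under which the left regular action of $\bar N$ becomes the fieldwise action $w\mapsto\lambda_w\otimes\mathrm{id}$ on the first tensor factor.

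For the symplectic part, I would argue as follows. By Lemma \ref{lem_omega} the $\GL_p$-stabilizer of a point $w\in\cO$ is a copy of $\Sp(q-1,\bR)$, and running the change of variables of \eqref{hatf_Aw} for such a $g$ — noting that $\det g=1$, so the modular factor in \eqref{eq:fou-ma-act} and the Jacobian of $n\mapsto g\cdot n$ on $\bar N$ are both trivial — and feeding in the intertwining relation \eqref{metap} (and its $\GL_p$-translates by the operators $A_w$), one finds that $g$ acts on the fiber $HS_w$ by $T\mapsto\tau_w(\tilde g)\,T\,\tau_w(\tilde g)^\ast$; the sign ambiguity of the metaplectic cover cancels because it occurs on both tensor factors, so this descends to a genuine representation of $\Sp(q-1,\bR)$, namely $\tau_w\otimes\tau_w^\ast$. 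Hence on each fiber the pair $(\bar N,\Sp(q-1,\bR))$ acts by the representation denoted $\lambda_w\rtimes\tau_w$. To decompose it I would split the second (contragredient) factor $L^2(\bR^{q-1}_w)^\ast$ using the even/odd decomposition \eqref{ev-od}, which is preserved by complex conjugation; since $\bar N$ acts only on the first factor and $\Sp(q-1,\bR)$ diagonally, this produces the two invariant subspaces and the summands $(\lambda_w\rtimes\tau_w)_i$, $i=0,1$. Each summand is irreducible by a two-step commutant argument: the commutant of $\lambda_w(\bar N)$ on the $i$-th piece is $\mathrm{id}\otimes B(L^2_i(\bR^{q-1})^\ast)$ because $\lambda_w$ is irreducible, and within this the commutant of the diagonal $\Sp(q-1,\bR)$-action equals the commutant of $\tau_w^\ast$ restricted to $L^2_i$, which is scalar by \cite[Theorem 4.56]{folland}; the same argument shows the $i=0$ and $i=1$ summands are inequivalent, as an intertwiner would have to intertwine $\tau_w^\ast|_{L^2_0}$ with $\tau_w^\ast|_{L^2_1}$. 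Integrating over $\cO$ against $d\iota$ then gives the stated decomposition.

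The main obstacle is bookkeeping rather than substance: the copy of $\Sp(q-1,\bR)$ used on each fiber is the stabilizer of $w$, and this moves the base point when $w\neq o_{\fn^*}$, so to interpret the result as a decomposition of the single group $\bar N\,\Sp(q-1,\bR)\subset\bar P$ one must keep track of the $\Sp(q-1,\bR)$-action on the parameter space $\cO$ and restrict to a measurable section; the purely fiberwise statement, which is all that "this construction can be done for any $\omega$" requires, is exactly the content extracted above and needs only the metaplectic intertwiner and the even/odd splitting. One should also record, as in \eqref{hatT_Aw}, that the fiberwise identifications hold only for $d\iota$-almost every $w\in\cO$, which is harmless for the direct-integral statement.
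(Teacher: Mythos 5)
Your proposal is correct and follows essentially the same route as the paper: read the fiberwise $\bar N\times\Sp(q-1,\bR)$-action off the bi-regular decomposition \eqref{eq:bi-deco} together with the metaplectic intertwiner \eqref{metap}, split the contragredient factor via the even/odd decomposition \eqref{ev-od}, and integrate over $\cO$. You merely supply details the paper leaves implicit (the two-step commutant argument for irreducibility and inequivalence of the two summands, the cancellation of the metaplectic sign, and the measurability caveats), which are all handled correctly.
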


\proof[Acknowledgements]
We would like to thank
 Professors Pierre Cartier, Jacques Faraut and Jean Ludwig
for some fruitful discussions. We are grateful
to Professor Robert Stanton for drawing
us attention to the work \cite{BK}.


\begin{thebibliography}{10}

\bibitem{BK}
M. Baldoni Silva and A.~W.~Knapp,
\emph{
Unitary representations induced from maximal parabolic subgroups,
}
J. Funct. Anal.
 \textbf{69},
 (1986)
21-120.


\bibitem{BSZ}
L. Barchini, M. Sepanski and R. Zierau,
\emph{Positivity of zeta distributions and small unitary
              representations},
in \emph{The ubiquitous heat kernel},
Contemp. Math., Amer. Math. Soc.,
 \textbf{398},
 (2006)
1--46.

\bibitem{Baruch-annmath}
E. M.~Baruch, \emph{
{
A proof of Kirillov's conjecture
}},
Ann. Math. \textbf{158} (2003), 207--252.


\bibitem{BJR}
C. Benson, J. Jenkins and G. Ratcliff,
\emph{
On Gelfand pairs associated with solvable Lie groups. 
}
Trans. Amer. Math. Soc. 
\textbf{321} (1990), no. 1, 85-116.


\bibitem{bopp-rub}
 N. Bopp  and H. Rubenthaler,
\emph{ Local zeta functions attached to the minimal spherical series
             for a class of symmetric spaces},
 {Mem. Amer. Math. Soc.},
    Vol.~174, No.~821, Amer. Math. Soc.,  2005.


\bibitem{Corwin-Gr}
 L. Corwin and F.P. Greenleaf,
\emph
{
Representations of Nilpotent Lie Groups and Their Applications
},
 Cambridge University Press, Cambridge, 1990. 

\bibitem{Dixmier}
J. Dixmier, 
\emph
{
Les $C^\ast$-alg\'ebres et leurs Repr\'esentations,
}
2nd Edition, Gauthier-Villars, Paris, 1964. 


\bibitem{mythesis}
V. Fischer,
\emph{Etude de deux classes de groupe nilpotents de pas deux},
PhD thesis Orsay Paris-Sud XI, 
\url{http://arxiv.org/abs/0810.4173.} 

\bibitem{folland}
 G. B.~Folland,
  \emph{Harmonic analysis in phase space},
   {Annals of Mathematics Studies},
   \textbf{122},
 {Princeton University Press},
 {1989}.	

\bibitem{Gelbart}
S.~Gelbart
\emph{A theory of {S}tiefel harmonics},
{Trans. Amer. Math. Soc.},
   \textbf{192}(1974), {29--50}.

\bibitem{He2}
S.~Helgason, \emph{Groups and geometric analysis},
 Academic Press, New York,
  London, 1984.



\bibitem{Howe-Lee-jfa}
R.~Howe and S.~T.~Lee, \emph{
Degenerate principal series representations
  of {${\rm GL}\sb n(\mathbf C)$} and {${\rm GL}\sb n(\bold R)$}}, J. Funct.
  Anal. \textbf{166} (1999), no.~2, 244--309. 

\bibitem{Kobayashi-korea}
T. Kobayashi,
\emph{
 Multiplicity-free theorems of the restrictions of unitary highest weight modules with respect to reductive symmetric pairs},
 Representation Theory and Automorphic Forms, vol. 255,
Progr. Math., pp 45-109, Birkh‰user, Boston MA, 2008. 

\bibitem{Kobayashi-contmath}
T. Kobayashi, 
\emph{
Branching problems of Zuckerman derived functor modules}, 
Representation Theory and Mathematical Physics in honor of Gregg 
Zuckerman (eds. Jeffrey Adams, Bong Lian, and Siddhartha Sahi),
Contemporary Mathematics, Amer. Math. Soc.,
in press.


\bibitem{Johnson-mathann}
K. D. Johnson, \emph{
Degenerate principal series and compact groups}, 
Math. Ann. \textbf{287} (1990), 703--718.

\bibitem{Kn-book}
A.~Knapp, \emph{Representation theory of semisimple groups}, Princeton
  University Press, Princeton, New Jersey, 1986.

\bibitem{Lee-Loke}
S.~T.~Lee and  
H.~Y.~Loke,
  \emph{
Degenerate Principal Series Representations
of $U(p, q)$
 and $Spin(p, q)$},
Compositio Math. 
  \textbf{132} (2002),  no.3, 311-348. 		


\bibitem{Neretin-Olshanski}
Yu.~A. Neretin and G.~I. Olshanski, \emph{Boundary values of
  holomorphic functions, singular unitary representations of the groups {${\rm
  O}(p,q)$} and their limits as {$q\to\infty$}}, J. Math. Sci. (New York)
  \textbf{87} 
(1997), no.~6, 3983--4035.

\bibitem{oz-duke}
B.~\O{}rsted and G.~Zhang,
\emph{ Generalized principal series representations and tube domains,}
 {Duke Math. J}, \textbf{78} (1995), 335--357. 



\bibitem{Sahi-crelle}
S.~Sahi, \emph{Jordan algebras and degenerate principal series}, J. Reine
  Angew. Math. \textbf{462} (1995), 1--18.

\bibitem{Sahi-Ste}
S.~Sahi and E. M.~Stein,
\emph{
Analysis n matrix space and Speh's
representations}, 
Invent.  Math. \textbf{101} (1990), 379-393.

\bibitem
{Speh-V}
B.~Speh and T.~Venkataramana,
\emph{
Discrete components of some complementary series representations}
Indian J. Pure Appl. Math. \textbf{41} (2010), no. 1, 
145-151. 

\bibitem{Stein}
E. M.~Stein,
  \emph{Harmonic analysis: Real-Variable Methods, Orthogonality, and
Oscillatory Integrals},
   {Princeton Mathematics Series},
   \textbf{43},
 {Princeton University Press},
 {1993}.	

\bibitem{stri}
R. S.~Strichartz,
  \emph{$L^p$ harmonic analysis and Radon transforms on the Heisenberg
group
},
   {J. Funct. Anal},
   \textbf{96} (1991),
350-406.


\bibitem{veluma}
S.~Thangavelu,
  \emph{An Introduction to the Uncertainty
Principle: Hardy's theorem on Lie groups},
   {Progress in  Mathematics},
   \textbf{217},
 {Birkh\"a{}user}, Boston,
 {2004}.	




\bibitem{Vinberg-Kimelfeld}             ¥
E. Vinberg and B. Kimelfeld,
\emph{
 Homogeneous domains on flag
 manifolds and    spherical subgroups of semisimple Lie groups.}
Funct. Anal. Appl. \textbf{12} (1978),
168-174.


\bibitem{Wallach}             ¥
N. Wallach,
\emph{Real Reductive Groups II
},
   {Pure and Applied  Mathematics},
   \textbf{132},
Academic Press, 1992. 


\bibitem{gkz-ma}
G.~Zhang,
\emph{Jordan algebras and generalized principal series representations,}
{Math. Ann.}, \textbf{302}(1995), 773--786.

\bibitem{Z-aim}
G. Zhang,
\emph{
 Degenerate principal series representations and their holomorphic extensions.
}
 Adv. Math. \textbf{223} (2010), no. 5, 
1495-1520.

\end{thebibliography}
\end{document}